%
%
%
%
%
%
%
%
%
%

\documentclass[10pt, twoside]{amsart}
\usepackage{amsmath,mathtools}
\usepackage{amsmath, amsthm, amssymb, amsfonts, enumerate}
\usepackage[colorlinks=true,linkcolor=blue,urlcolor=blue]{hyperref}
\usepackage{dsfont}
\usepackage{color}
\usepackage{geometry}
\usepackage{todonotes}
\usepackage{epstopdf}
\usepackage{bbm}
\usepackage{geometry}
\usepackage[utf8]{inputenc}
\usepackage{bm}
\usepackage{amsfonts}
\usepackage{amsfonts}
\usepackage{textcomp}
\usepackage{amssymb}
\usepackage{float}
\usepackage{tikz}
\usepackage{epsfig}
\usepackage{amsmath}
\usepackage[english]{babel}
\usepackage{a4}
\usepackage{enumerate}
\usepackage{tcolorbox}
\usepackage{soul}
\geometry{left=2.4cm, right=2.4cm, vmargin=3cm}
\newcommand{\stkout}[1]{\ifmmode\text{\sout{\ensuremath{#1}}}\else\sout{#1}\fi}

\newtheorem{theorem}{Theorem}[section]

\newtheorem{remark}[theorem]{Remark}
\newtheorem{hypothesis}[theorem]{Assumption}

\newtheorem{prop}[theorem]{Proposition}
\newtheorem{coroll}[theorem]{Corollary}
\newtheorem{definition}[theorem]{Definition}

\newcommand{\ds}{\displaystyle}
\newcommand{\tr}{\mathop{\mathrm{Tr}}\nolimits}

\def \R{\mathbb{R}}

\definecolor{red}{rgb}{1.0,0.0,0.0}

\definecolor{blu}{rgb}{0.0,0.0,1.0}

\definecolor{gre}{rgb}{0.03,0.50,0.03}

\usepackage[T1]{fontenc}

\title[Stochastic  control with measurable coefficients]{Stochastic Optimal Control  with Measurable Coefficients  and Applications }

\date{\today}
\author[de Feo]{Filippo de Feo}\address{F.~de Feo:  Institut für Mathematik, Technische Universität Berlin, Berlin, Germany and Department of Economics and Finance,  Luiss Guido Carli University, Rome, Italy.}\email{\href{mailto:defeo@math.tu-berlin.de}{defeo@math.tu-berlin.de}}

\numberwithin{equation}{section}


\begin{document}

\begin{abstract}Stochastic optimal control control problems with merely measurable coefficients are  not well understood.
In this manuscript, we consider fully non-linear stochastic optimal control problems in infinite horizon  with measurable coefficients and (local) uniformly elliptic diffusion.   Using the theory of $L^p$-viscosity solutions, we show existence of an $L^p$-viscosity solution $v\in W_{\rm loc}^{2,p}$ of the Hamilton-Jacobi-Bellman (HJB) equation, which, in turn, is also a strong solution (i.e. it satisfies the HJB equation pointwise a.e.). We are then led to prove verification theorems, providing necessary and sufficient conditions for optimality. These results allow us to  construct optimal feedback controls and to characterize the value function as the unique $L^p$-viscosity solution of the HJB equation. To the best of our knowledge, these are the first results for fully non-linear stochastic optimal control problems with measurable coefficients. We use the theory developed to solve a stochastic optimal control problem arising in economics within the context of optimal advertising.
\end{abstract}

\maketitle
{\bf Mathematics Subject Classification (2020):} 49L25, 93E20, 49K45,  49L20,  49L12, 49N35 

{\bf Keywords:}  stochastic optimal control, measurable coefficients, Hamilton-Jacobi-Bellman equation, $L^p$-viscosity solution, viscosity solution, verification theorem, optimal synthesis, economic models, optimal advertising models  

\tableofcontents
\section{Introduction}
Consider a fully non-linear stochastic control problem with infinite horizon in which the goal is to minimize, over all admissible control processes $u(t)$, a cost functional of the form
\begin{equation}\label{eq:functional_intro}
  J(x,u(\cdot)) :=
\mathbb E\left[\int_0^{\infty} e^{-\rho t}
l(y(t),u(t)) dt \right],
\end{equation}
where the state $y(t)$ is subject to the dynamics
\begin{equation*}
dy(t) = \ds b \left ( y(t),u(t) \right)
         dt 
\ds  + \sigma \left (y(t), u(t) \right)\, dW(t), \quad \forall t \geq 0,\quad y(0)=x  \in \mathbb{R}^n.
\end{equation*}
\paragraph{\textbf{Literature review.}}
Stochastic optimal control problems  have been deeply studied under standard continuity assumptions (e.g. global continuity plus Lipschitz continuity in $x$  of $b(x,u),\sigma(x,u)$, uniformly in $u$, and local uniform continuity in $x$ of $l(x,u)$, uniformly in $u$) using various approaches (see e.g. \cite{fleming_soner,krylov,nisio,pham_2009,touzi,yong_zhou}). 
 One of the most successful {approaches}  is the one via viscosity solutions of the  Hamilton-Jacobi-Bellman (HJB) equation, which is a fully non-linear second-order partial differential equation of the form
\begin{equation*}
\rho v -H(x,Dv,D^2v)=0,
\quad  x \in \mathbb R^n.
\end{equation*}
This approach allows a comprehensive understanding of the control problem, e.g. it is possible to characterize the value function as the unique solution of the HJB equation, prove verification theorems as well  as construct optimal feedback controls (see e.g. \cite{fleming_soner,nisio,pham_2009,touzi,yong_zhou,gozzi_swiech_zhou_2005,gozzi_swiech_zhou_2010}).

However, to our knowledge,  control problems measurable coefficients (in the state and control variables) have not been  well studied, although being particularly relevant as real world applications may exhibit irregular dynamics. The only previous  paper on the topic seems to be \cite{menoukeu_tangpi}, where an approach via maximum principle has been employed {for a drift of the form $b(t, x, a)=b_1(t, x)+b_2(t, x, a)$, with $b_1$ bounded and measurable (but independent of the control) and $b_2$  smooth in ($x, a$),} and where the noise is additive and non-degenerate {(see Theorem 1.1 there)}. This  framework is motivated by an optimal consumption problem in which investors are subject to a wealth tax. 

When the coefficients are merely measurable the standard approach via viscosity solutions of HJB equations is not possible. However, the notion of $L^p$-viscosity solution (recalled in Appendix \ref{sec:appendix_definitions}) has been introduced in the literature to treat second-order fully non-linear uniformly elliptic and parabolic partial differential  equations (PDEs) with measurable coefficients and a very powerful theory has been developed (see e.g. \cite{caffarelli_c_s,crandall_kocan_soravia_swiech,crandall_kocan_swiech,koike_perron,swiech1997})\footnote{see also \cite{zhang_zhu_zhu} for a different approach for HJB equations  with distribution-valued coefficients}. In this definition the $C^2$ test functions of viscosity solutions are replaced with the larger class of $W^{2,p}$-test functions, so that a stronger notion of solution is obtained, {allowing to achieve  uniqueness of $L^p$-viscosity solutions for PDEs with measurable coefficients on bounded domains   and to prove powerful $W^{2,p}$-regularity results \cite{crandall_kocan_swiech,swiech1997}\footnote{
This theory was  used in \cite{defeo_federico_swiech}  to get $W_{\rm loc}^{2,p}$ and $C^{1,\alpha}$ partial regularity for the viscosity solution of an HJB equation on an (infinite-dimensional)  Hilbert space related to a stochastic control problem with delays (coefficients there satisfy standard Lipschitz continuity assumptions and diffusion is non-degenerate) and the  $C^{1,\alpha}$ partial regularity was used in a semilinear case in \cite{deFeoSwiech} to construct optimal feedback controls  (see also \cite{defeo_phd}).}.}

\paragraph{\textbf{Our results.}} It is evident from the literature review above that control problems with measurable coefficients are not well understood.
The goal of this paper is to tackle  the fully non-linear stochastic optimal control problem \eqref{eq:functional_intro} (introduced in Section \ref{sec:formul})
with measurable coefficients and (local) uniform elliptic diffusion. 
In particular, in  Section \ref{sec:HJB}, using the theory of $L^p$-viscosity solutions \cite{caffarelli_c_s,swiech1997}, we show that if $v$ is an $L^p$-viscosity solution with $p \geq n$, then $v\in W_{\rm loc}^{2,p}$,  so that it is also a strong solution of the HJB equation (i.e. it satisfies HJB pointwise a.e.) (Proposition \ref{th:existence_uniqueness_Lp_viscosity}). Moreover, if the Hamiltonian $H$ is also continuous, this is true for any viscosity solution (Proposition \ref{th:existence_uniqueness_viscosity_infinite}). Using  Perron's method \cite{koike_perron}, we give sufficient conditions for the existence of an $L^p$-viscosity solution $v$ for $p \geq n$ (Proposition \ref{prop:existence_viscosity}). 

Motivated by the previous points, 
in Section \ref{sec:verification} we consider a $W_{\rm loc}^{2,n}$-strong solution $v$ of the (fully non-linear) HJB equation. Using Dynkin's formula for $W_{\rm loc}^{2,n}$-functions \cite{krylov} (recalled here in Appendix \ref{app:ito}), we prove verification theorems, providing  necessary and sufficient conditions for optimality (Theorems \ref{th:verification}, \ref{th:verification_II},  \ref{th:verification_necessary} and Remark \ref{rem:strong_solutions}). As a consequence, such verification theorems hold when $v$ is an $L^p$-viscosity solution. If the Hamiltonian $H$ is also continuous, they also hold when $v$ is a (standard) viscosity solution (Corollaries \ref{cor:verification_viscosity_sufficient}, \ref{cor:verification_theorem_necessary_corollary}). In  these cases, our optimal control $u^*(\cdot)$ satisfies the relation 
\begin{equation*}
 u^*(s)\in  {{\rm argmin}}_{u \in U}   H_{\rm cv}(y^*(s),D v(y^*(s))  ,D^2 v(y^*(s))  ,u), \quad   \mathbb P\textit{-a.s., for  a.e. } s \geq 0,
 \end{equation*}
 where $H_{\rm cv}$ is the current value Hamiltonian and $Dv, D^2v$ are defined a.e.\footnote{of course, if $v \in W^{2,p}_{\rm loc}(\mathbb R^n)$ for $p >n$, by Sobolev embedding theorems, the first order gradient $Dv$ is well-defined and continuous in the classical sense.}
We also remark, {as we will see in Section \ref{sec:synthesis}}, that the sufficient verification theorem Theorem \ref{th:verification} (together with Remark \ref{rem:strong_solutions}) and Corollary \ref{cor:verification_viscosity_sufficient} can be used to obtain uniqueness of strong solutions  or  $L^p$-viscosity solutions of the HJB equation (i.e. from the statements of the theorems we get that $v$ is equal to the value function).

In Section \ref{sec:synthesis}, we use our verification theorems to solve the stochastic control problem by constructing optimal feedback controls  (Propositions \ref{cor:closed_loop},  \ref{prop:optimal_feedback_example}) and  {by characterizing $V$ as the unique $L^p$-viscosity solution and unique strong solution of the HJB equation, see Corollary \ref{coroll:existence_uniqueness}.} 

\paragraph{\textbf{Comparison with the literature.}} {To the best of our knowledge, this is the first paper that addresses fully non-linear stochastic optimal control problems with measurable coefficients.}  {We remark that our results on uniqueness of $L^p$-viscosity solutions on unbounded domains are also particularly relevant since we are not aware of general uniqueness results for such solutions of PDEs on unbounded domains with measurable coefficients. We achieve these  via a control-verification technique, which is different from an analytic method used in  \cite{crandall_kocan_swiech,swiech1997} for PDEs  on bounded domains.}

We also stress that our results complement the existing ones for non-smooth solutions of the HJB equation, which are available when the coefficients satisfy standard Lipschitz continuity assumptions:  for instance, with respect to \cite{gozzi_russo} we treat fully non-linear stochastic control problems, and we do not assume the semiconcavity of $v$ as in \cite{yong_zhou,gozzi_swiech_zhou_2005,gozzi_swiech_zhou_2010} (see, in particular \cite{gozzi_swiech_zhou_2010})\footnote{see also, e.g., \cite{defeo_phd,deFeoSwiech,defeo_swiech_wessels,fgs_book,federico_gozzi_2018,stannat_wessels_2021} and the references therein for results of this kind in infinite-dimension.}.

{Subsequent to the submission of this manuscript, two further works appeared on arXiv studying stochastic control problems with measurable coefficients, reflecting the growing interest in this area. In particular, \cite{du_wei} studies finite horizon stochastic optimal control problems for SDEs whose drift and running
cost are merely measurable in the state variable and the noise is additive, using PDE techniques and a policy iteration scheme. 
Moreover, \cite{criens} (in particular version v4, i.e. arXiv:2404.17236v4, submitted to arXiv in September 2025, while versions v1-v3  treated continuous coefficients) studies the case of measurable coefficients with $L^p$-drift and uniformly elliptic diffusion using a semigroup approach and the theory of $L^p$-viscosity solutions.}

\paragraph{\textbf{Applications to optimal advertising.}}
In Section \ref{sec:application}, using the theory developed, we solve a stochastic optimal control  problem arising in economics within the context of optimal advertising (see e.g. \cite{grosset_viscolani,nerlove_arrow,deFeo_2023,defeo_federico_swiech,gozzi_marinelli_2004}  and the references therein).
We consider a firm who seeks to optimize the advertising campaign for a certain product. We assume that the stock of the advertising goodwill $y(t)$ of the product is given by the following controlled $1$-dimensional SDE
  \begin{equation*} 
 dy(t) = \left[ a (y(t))+c u(t) \right]          dt   + [\nu(y(t)) +\gamma u(t)] dW(t),\quad   y(0)=x,  
 \end{equation*} 
 where  the control $u(t)$ is the rate of the investment; $c> 0$; $a(x)$ is a non-positive bounded measurable function (hence, it is allowed to be discontinuous),  representing image deterioration under different regimes, depending on the level of the goodwill; the real valued Brownian motion $W(t)$ represents the uncertainty in the market; $\gamma>0$ and $\nu(x)$ represent the intensity of the uncertainty in the model. Here,  $\nu$ satisfies suitable assumptions.   The goal is to minimize
 $$ \mathbb{E} \left[\int_0^\infty e^{-\rho s} (h(u(s)-g(y(s))) d s\right], $$ where  $\rho >0$ is a discount factor,  $h(u)$  is a cost function  and $g (x)$ is an  utility function, which is also allowed to be discontinuous.
 Using the theory developed in the previous sections, we completely solve the optimal advertising problem by characterizing the value function as the unique $L^p$-viscosity solution of the HJB equation and by constructing optimal feedback controls.

Throughout the whole paper, we will use the notations from Appendix \ref{sec:notations}. 
\section{The stochastic optimal control problem}\label{sec:formul}
In this section, we introduce the stochastic optimal control problem.

We start by recalling the concept of  generalized reference probability spaces and reference probability spaces.
\begin{definition}\label{def:generalized_ref_prob_space}
A generalized reference probability space is $\eta=(\Omega,\mathcal{F},\mathcal{F}_t,
W(t),\mathbb{P})_{t\geq 0}$, where  $(\Omega,\mathcal{F},\mathbb{P})$ is a complete  probability space equipped with a {complete right-continuous} filtration $\mathcal F_t$  and $W(t)$ is a standard $\mathbb{R}^n$-valued Wiener process; $\eta$ is called a reference probability space when $\mathcal F_t$ is the augmented filtration generated by $W$.
\end{definition}
\begin{remark}\label{rem:properclass_ref_prob_spaces}
We will work with the class of all (generalized) reference probability spaces, which\footnote{similarly to the proper class of all sets} is a proper class (i.e. not a set, in the sense of Von Neumann–Bernays–Gödel set theory) \cite{deFeoSwiech}.
\end{remark}

We introduce an infinite horizon optimal control problem in the weak formulation. 
On some generalized reference probability space $\eta=(\Omega,\mathcal{F},\mathcal{F}_t,W(t),\mathbb{P})_{t\geq 0}$, we consider the following controlled stochastic differential equation (SDE) 
\begin{equation}
\label{eq:SDDE}
dy(t) = \ds b \left ( y(t),u(t) \right)
         dt 
\ds  + \sigma \left (y(t), u(t) \right)\, dW(t), \quad \forall t \geq 0,\quad y(0)=x  \in \mathbb{R}^n,
\end{equation}
where $b \colon \mathbb{R}^n \times U \to \mathbb{R}^n$, $\sigma \colon \mathbb{R}^n  \times U \to \mathbb R^{n\times n}$, with $U$ {being a non-empty Borel subset of $\mathbb R^n$}, and  $u(\cdot): \Omega\times [0,+\infty )\to U$ is a control process lying in the  class  of admissible controls $\mathcal U$ defined below.  
 
We consider a cost functional of the form
\begin{equation}
\label{eq:obj-origbis}
J \colon \mathbb R^n \times \mathcal U \to \mathbb R, \quad  J(x,u(\cdot)) :=
\mathbb E\left[\int_0^{\infty} e^{-\rho t}
l(y(t),u(t)) dt \right],
\end{equation}
where $\rho>0$ is the discount factor and
$l \colon \mathbb{R}^n \times U \to \mathbb{R} $ is the running cost.

We will consider the following assumptions.
\begin{hypothesis}\label{hp:measurability}
$b,\sigma,l$ are Borel measurable. 
 \end{hypothesis}
    \begin{hypothesis}\label{hp:locally_bounded_coeff}
 $b(\cdot,u),\sigma(\cdot,u),l(\cdot,u)$ are bounded on bounded subsets of $\mathbb R^n$, uniformly in $u \in U$.
 \end{hypothesis}
 For some results, we will strengthen the previous assumption to the following ones:
     \begin{hypothesis}\label{hp:b,sigma,bounded}
 $b,\sigma$ are bounded.
 \end{hypothesis}
 \begin{hypothesis}\label{hp:l_subpolynomial_growth}
 There exist $C>0 ,m \geq 0$  s.t. $|l(x,u)| \leq C(1+|x|^m)$, for all  $x \in \mathbb R^n, u \in U$.
  \end{hypothesis}
      \begin{hypothesis}\label{hp:l,bounded}
 $l$ is bounded.
 \end{hypothesis}
 \begin{hypothesis}\label{hp:sigma_uniformly_continuous}
 $\sigma(\cdot,u)$ is uniformly continuous on bounded sets of $\mathbb R^n$, uniformly in $u \in U.$
 \end{hypothesis}
 
 Important assumptions will be local uniform elliptic condition and uniform elliptic condition below.
 \begin{hypothesis}\label{hp:uniform_ellipticity}
For every $R>0$  there exists $\lambda_{R} >0$ such that  $ \sigma(x,u) \sigma(x,u)^T  \geq \lambda_R I,$ for all $x\in B_R, u \in U.$
\end{hypothesis}
 \begin{hypothesis}\label{hp:uniform_ellipticity_strong}
Let $\sigma \colon \mathbb{R}^n  \times U \to S^n$ such that there exists $\lambda >0$ such that $ \sigma(x,u)  \geq \lambda I ,$ for all $x,  u \in U.$
\end{hypothesis}

Next, we define   admissible controls.
\begin{definition}\label{def:weak-form} 
We say that  $(\Omega,\mathcal{F},\mathcal{F}_t, u(t), 
W(t),\mathbb{P})_{t\geq 0}$ is an admissible control if   $\eta=(\Omega,\mathcal{F},\mathcal{F}_t,
W(t),\mathbb{P})_{t\geq 0}$ is generalized reference probability space, $u(\cdot): \Omega\times [0,+\infty )\to U$ is $\mathcal{F}_t$-progressively measurable, and for any initial state $x \in \mathbb R^n$ there exists   a solution  $y(t)$ of \eqref{eq:SDDE} on $\eta$ with continuous paths a.s. such that
\begin{align*}
&  \mathbb E\left[\int_0^{\infty} e^{-\rho t} \left | l(y(t),u(t)) \right |  dt \right] < \infty. 
\end{align*}
  The pair $(y(\cdot),u(\cdot))$ is called an admissible pair (for the initial state $x$). We denote by $\mathcal{U}$  the class of admissible controls. With an abuse of notation we will simply write $u(\cdot) \in \mathcal U$.
\end{definition}
\begin{remark}
As  members of $\mathcal{U}$ range over the proper class of all generalized reference probability spaces $\eta$ (Remark \ref{rem:properclass_ref_prob_spaces}), $\mathcal{U}$ may be a proper class.
\end{remark}
Throughout the whole paper, given an admissible pair $(y(\cdot),u(\cdot))$, for $R>0 ,t\geq 0$, we will define the stopping time
 \begin{equation}\label{eq:def_stopping_time}
 \tau^R := \inf \{s \in[0, t]:|y(s)|>R\}.
 \end{equation}

We impose the next two assumptions here in order to have a well-defined optimal control problem without requiring additional regularity  on the coefficients (but they will be dropped in Section \ref{sec:synthesis}). 
\begin{hypothesis}\label{hp:state-eq}
We assume that $\mathcal U \neq \emptyset.$
\end{hypothesis}

The goal is to minimize $J(x,u(\cdot)) $ over all admissible controls $u(\cdot) \in \mathcal{U}$.
Following the dynamic programming approach, we define the value function for the optimal control problem by
$$
V: \mathbb R^n \rightarrow \mathbb{R}, \quad V(x):=\inf _{u(\cdot) \in \mathcal{U}} J(x , u(\cdot)) \quad \forall x \in \mathbb R^n.
$$
\begin{hypothesis}\label{hp:V}
 $V$ is well-defined (i.e. $V(x)>-\infty$ for every $x \in \mathbb R^n$).
\end{hypothesis}
An admissible control $u(\cdot)\in \mathcal U$ is said to be optimal if  $J(x,u(\cdot)) = V(x)$.

\section{$L^p$-viscosity solutions of HJB equations}\label{sec:HJB}
In  this section, we study the HJB equation using the notion of $L^p$ viscosity solutions. Using the theory of $L^p$-viscosity solutions \cite{caffarelli_c_s,swiech1997}, if $v$ is an $L^p$-viscosity solution with $p\geq n$, we prove that $v\in W_{\rm loc}^{2,p}$,  so that it is also a strong solution of the HJB equation. Moreover, if the Hamiltonian $H$ is also continuous, this is true for any viscosity solution. Using  Perron's method \cite{koike_perron}, we give sufficient conditions for the existence of an $L^p$-viscosity solution $v$ for $p \geq n$. 
These facts justify an approach via verification theorems to the control problem in the next sections. We will use the notations and  definitions from Appendices \ref{sec:notations}, \ref{sec:appendix_definitions}. 
\subsection{$L^p$-viscosity solutions}
We define the current value Hamiltonian  $H_{\rm cv}:\mathbb R^n \times \mathbb R^n\times   S^n \times U \to \R$  and the Hamiltonian
 $H:\mathbb R^n \times \mathbb R^n\times   S^n \to \R$, respectively,  by 
 \begin{small}
\begin{align*}
&H_{\rm cv}(x,p,Z,u):= b(x,u) \cdot p+ {1\over 2}\tr (\sigma(x,u)\sigma(x,u)^T Z) + l(x,u) ,  & \forall x,p \in \mathbb R^n, Z \in S^{n}, u \in U;\\
& H(x,p,Z) :=  \inf_{u\in U}  H_{\rm cv}(x,p,Z,u) ,  & \forall x,p \in \mathbb R^n, Z \in S^{n}.
\end{align*}
 \end{small}
 \begin{hypothesis}\label{hp:H_measurable}
There exists a countable subset $\overline U \subset U$  such that $H(x,p,Z) =  \inf_{u\in \overline U}  H_{\rm cv}(x,p,Z,u),$ for all $ x,p \in \mathbb R^n, Z \in S^{n}$.
\end{hypothesis}
\begin{remark}
\begin{enumerate}[(i)]
\item  If Hypotheses \ref{hp:measurability}, \ref{hp:H_measurable} are satisfied, then $H$ is Borel measurable.
\item Hypothesis \ref{hp:H_measurable} is satisfied, e.g. if $b(x,\cdot), \sigma(x,\cdot), l(x,\cdot)$ are continuous for every $x$.
\end{enumerate}
\end{remark}

 The HJB equation associated with the optimal control problem is the following second order fully non-linear partial differential equation
\begin{equation}
\label{eq:HJB}
\rho v - H(x,Dv,D^2v)=0,
\quad x \in \mathbb R^n.
\end{equation}
In order to use the theory of $L^p$-viscosity solutions, we rewrite \eqref{eq:HJB} in the form
$$F(x,v,Dv,D^2v)=f(x), \quad  x \in
\mathbb R^n,$$
where  $F \colon \mathbb R^n \times  \mathbb R \times \mathbb R^n \times S^n \to \mathbb R,$ $ F(x,r,p,X):=\rho r - H(x,p,X)+H(x,0,0),$ $f\colon \mathbb R^n  \to \mathbb R,$ $f(x):=H(x,0,0)$, for all $x,p \in \mathbb R^n,r \in \mathbb R, X \in  S^n$.
\begin{prop}\label{th:existence_uniqueness_Lp_viscosity}
Let  Assumptions \ref{hp:measurability}, \ref{hp:locally_bounded_coeff}, \ref{hp:sigma_uniformly_continuous}, \ref{hp:uniform_ellipticity}, \ref{hp:H_measurable} hold. 
\begin{enumerate}[(i)]
\item Let  $v \in C(\mathbb R^n)$ be an $L^p$-viscosity solution of \eqref{eq:HJB} for some $p\geq n$. Then, we have $v\in W^{2,p}_{\rm loc}(\mathbb R^n)$  and it is a strong solution of  \eqref{eq:HJB}.
\item Conversely, let $p \geq n$ and  $v\in W^{2,p}_{\rm loc}(\mathbb R^n)$ such that it is a strong solution of  \eqref{eq:HJB}.  Then $v$ is an   $L^p$-viscosity solution of \eqref{eq:HJB}.
\end{enumerate}
\end{prop}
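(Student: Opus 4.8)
The plan is to invoke the standard $W^{2,p}$-regularity theory for $L^p$-viscosity solutions of fully non-linear uniformly elliptic equations, as developed in \cite{caffarelli_c_s,crandall_kocan_swiech,swiech1997}, and apply it locally on balls $B_R$. The two directions are logically quite different: (i) is a genuine regularity result requiring the interior $W^{2,p}$-estimates, whereas (ii) is essentially a consistency check that a strong solution, once we know its second derivatives are $L^p_{\rm loc}$, can be compared against $W^{2,p}$ test functions.

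\medskip

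\emph{Proof of (i).} First I would verify that $F$ satisfies the structural hypotheses needed for the $W^{2,p}$-estimates. Fix $R>0$. By Assumption \ref{hp:uniform_ellipticity}, on $B_R$ we have $\sigma(x,u)\sigma(x,u)^T\geq \lambda_R I$; together with Assumption \ref{hp:locally_bounded_coeff}, the coefficients $b(\cdot,u),\sigma(\cdot,u),l(\cdot,u)$ are bounded on $B_R$ uniformly in $u$, so $\sigma\sigma^T$ has eigenvalues in $[\lambda_R,\Lambda_R]$ for some $\Lambda_R$. Taking the infimum over $u\in\overline U$ (which is legitimate and preserves Borel measurability by Assumptions \ref{hp:measurability}, \ref{hp:H_measurable}), one checks that $F(x,r,p,X)-F(x,r,p,Y)$ is squeezed between the Pucci extremal operators $\mathcal P^{\mp}_{\lambda_R,\Lambda_R}(X-Y)$ on $B_R$ — i.e. $F$ is uniformly elliptic on $B_R$ with ellipticity constants $\lambda_R,\Lambda_R$ — and that $F$ is Lipschitz in $(r,p)$ with constants controlled by $\rho$ and the bounds on $b$. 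The key additional ingredient for the $W^{2,p}$-estimates is a smallness/continuity condition on the oscillation of $F$ in $x$ relative to the ``frozen'' operator: here this follows from Assumption \ref{hp:sigma_uniformly_continuous}, which gives that $x\mapsto\sigma(x,u)\sigma(x,u)^T$ is uniformly continuous on $B_R$ uniformly in $u$, hence the $\beta$-function measuring the $x$-oscillation of $X\mapsto F(x,0,0,X)+f(x)$ around its value at a fixed point tends to $0$; the right-hand side $f(x)=H(x,0,0)$ is bounded on $B_R$ by Assumption \ref{hp:locally_bounded_coeff}, hence in $L^p(B_R)$ for every $p$. With these verified, the interior $W^{2,p}$-estimate for $L^p$-viscosity solutions (e.g. \cite[Theorem 2.1 or its $L^p$-analogue]{swiech1997}, \cite{crandall_kocan_swiech}) applies: any $L^p$-viscosity solution $v\in C(B_{2R})$ with $p\geq n$ satisfies $v\in W^{2,p}(B_R)$ with a norm bound depending on $\|v\|_{L^\infty(B_{2R})}$, $\|f\|_{L^p(B_{2R})}$, $\lambda_R,\Lambda_R,n,p$ and the modulus of continuity. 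Since $R$ is arbitrary, $v\in W^{2,p}_{\rm loc}(\mathbb R^n)$. Finally, once $v\in W^{2,p}_{\rm loc}$ with $p\geq n$, the equivalence between $L^p$-viscosity solutions and strong solutions in this regularity class (see \cite{caffarelli_c_s}, \cite[Lemma]{crandall_kocan_soravia_swiech}) gives that $v$ satisfies \eqref{eq:HJB} pointwise a.e., i.e. it is a strong solution.

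\medskip

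\emph{Proof of (ii).} This is the softer direction. Let $v\in W^{2,p}_{\rm loc}(\mathbb R^n)$, $p\geq n$, be a strong solution, so $\rho v(x)-H(x,Dv(x),D^2v(x))=0$ for a.e. $x$. I must show $v$ is an $L^p$-viscosity sub- and supersolution. Take $\varphi\in W^{2,p}_{\rm loc}$ and a point $x_0$ where $v-\varphi$ has a local maximum; I want to show $\operatorname{ess\,limsup}_{x\to x_0}\big(\rho v(x)-H(x,D\varphi(x),D^2\varphi(x))\big)\leq 0$ (and the dual inequality for minima). Since $v-\varphi$ has a local max at $x_0$ and both lie in $W^{2,p}_{\rm loc}\hookrightarrow C^1$ (as $p\geq n$, indeed $p>n$ gives $C^{1,\alpha}$; for $p=n$ one argues with the appropriate a.e. notion), a Jensen–Aleksandrov–type lemma for $W^{2,p}$ functions gives a sequence of points $x_k\to x_0$, approaching along points of twice-differentiability, at which $D(v-\varphi)(x_k)\to 0$ and $D^2(v-\varphi)(x_k)\leq o(1)I$, so $D^2v(x_k)\leq D^2\varphi(x_k)+o(1)I$ and $Dv(x_k)=D\varphi(x_k)+o(1)$. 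Evaluating the a.e. equation for $v$ at such $x_k$ and using that $H$ is degenerate-elliptic (monotone in $Z$) and Lipschitz in $p$, one obtains $\rho v(x_k)-H(x_k,D\varphi(x_k),D^2\varphi(x_k))\leq o(1)$; the essential-limit formulation of the $L^p$-viscosity inequality then follows, and the supersolution property is symmetric. This standard argument is recorded e.g. in \cite{caffarelli_c_s} or \cite{crandall_kocan_soravia_swiech}; I would simply cite it.

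\medskip

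\emph{Main obstacle.} The crux is part (i), and specifically checking that the hypotheses in force here (merely Borel $b,\sigma,l$, local boundedness, $\sigma$ locally uniformly continuous, local uniform ellipticity) are exactly sufficient to trigger the known interior $W^{2,p}$-estimates — in particular that the infimum over $\overline U$ does not destroy the structural conditions: uniform ellipticity is preserved under infima of uniformly elliptic operators, and the modulus controlling the $x$-oscillation of $H(\cdot,p,Z)$ near a point is controlled by the common modulus of continuity of the $\sigma(\cdot,u)$ (the drift and cost enter only the lower-order terms, which the $L^p$-theory handles through their $L^p_{\rm loc}$ bounds). Once that bookkeeping is done, everything reduces to citing \cite{swiech1997,crandall_kocan_swiech,caffarelli_c_s}; I do not expect any genuinely new difficulty.
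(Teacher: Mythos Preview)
Your plan matches the paper's proof almost exactly: verify the structural hypotheses of the $L^p$-viscosity regularity theory on each ball $B_R$, then invoke \cite[Theorem 3.1]{swiech1997} for the $W^{2,p}$-estimate and \cite{caffarelli_c_s} for the equivalence with strong solutions. One point you have omitted, however, is essential: the $W^{2,p}$-regularity theorem in \cite{swiech1997} requires \emph{convexity} (or concavity) of $F$ in $(p,X)$ --- this is the Evans--Krylov ingredient without which the best available estimate is only $W^{1,p}$/$C^{1,\alpha}$. Here $H(x,p,X)$ is an infimum of functions that are affine in $(p,X)$, hence concave, so $F=\rho r - H + H(\cdot,0,0)$ is convex in $(p,X)$; the paper records this explicitly, and you should too, since none of the other structural conditions you list would deliver $W^{2,p}$ on their own.

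For part (ii), your Jensen--Aleksandrov sketch is fine as motivation but unnecessary (and slightly shaky at $p=n$, where $W^{2,n}\not\hookrightarrow C^1$): the paper simply observes continuity via Sobolev embedding and then cites \cite[Lemma 2.6]{caffarelli_c_s}, which is exactly the consistency statement you describe, and you say you would do the same.
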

\begin{proof}
\begin{enumerate}[(i)]
\item  
Let $R>0$ be arbitrary; then:
\begin{itemize}
\item $F$ satisfies \cite[Structure condition (SC)]{caffarelli_c_s} on $B_R \times  \mathbb R \times \mathbb R^n \times S^n$  thanks to Assumptions \ref{hp:locally_bounded_coeff}, \ref{hp:uniform_ellipticity}.
\item $F$ is convex in $p,X$;
\item condition \cite[Eq. (3.1)]{swiech1997} is satisfied  due to Assumption \ref{hp:locally_bounded_coeff}, \ref{hp:sigma_uniformly_continuous}, \ref{hp:uniform_ellipticity}  (see \cite[Proof of Theorem 9.1]{crandall_kocan_swiech} for a detailed explanation). 
\item Due to Assumption \ref{hp:locally_bounded_coeff}, $f \in L^\infty(B_R)$.
\item  $v$ is also  an $L^p$-viscosity solution of 
\begin{equation}\label{eq:HJB_F_on_BR}
F(x,v,Dv,D^2v)=f(x),
\quad x \in B_R.
\end{equation}
\end{itemize}
Hence, we can apply  \cite[Theorem 3.1]{swiech1997}, to have $v\in W^{2,p}(B_R)$.

The fact that $v$ is a strong solution to the HJB equation follows by \cite[Theorem 3.6, Corollary 3.7]{caffarelli_c_s}.
\item By Sobolev embeddings, we have  $v \in   C^{0,\alpha}_{\rm loc}(\mathbb R^n)$ so that it is continuous. By \cite[Lemma 2.6]{caffarelli_c_s}, for every $R>0$, $v$ is an $L^p$-viscosity solution of \eqref{eq:HJB_F_on_BR}. The claim follows.
\end{enumerate}
\end{proof}
\begin{prop}\label{prop:existence_viscosity}
Let  Assumptions \ref{hp:measurability}, \ref{hp:locally_bounded_coeff}, \ref{hp:l,bounded},  \ref{hp:uniform_ellipticity_strong}, \ref{hp:H_measurable} hold and let $p \geq n$. Then there exists an $L^p$-viscosity solution $v \in C_b(\mathbb R^n)$ to \eqref{eq:HJB}.
\end{prop}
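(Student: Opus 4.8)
The plan is to construct a global solution by Perron's method on an exhaustion of $\mathbb{R}^n$ by balls, combined with interior Hölder estimates and the stability of $L^p$-viscosity solutions. The starting observation is that, by Assumption \ref{hp:l,bounded}, there is $M>0$ with $|l|\le M$; since $H(x,0,0)=\inf_{u\in U}l(x,u)$, the datum $f(x)=H(x,0,0)$ satisfies $\|f\|_{L^\infty(\mathbb R^n)}\le M$, and the constant functions $\underline v\equiv -M/\rho$ and $\overline v\equiv M/\rho$ are, respectively, a classical (hence $L^p$-viscosity) sub- and supersolution of $F(x,v,Dv,D^2v)=f(x)$, with $\underline v\le 0\le\overline v$.

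Fix $R>0$. On $B_R$, Assumptions \ref{hp:locally_bounded_coeff} and \ref{hp:uniform_ellipticity_strong} guarantee that $F$ satisfies the structure condition (SC) of \cite{caffarelli_c_s} with uniform ellipticity constants, that $f\in L^\infty(B_R)$, and that $F$ is convex in $(p,X)$; moreover $\partial B_R$ is smooth and uniformly convex, so boundary barriers are available. Perron's method for $L^p$-viscosity solutions \cite{koike_perron} (together with the comparison principle on bounded domains of \cite{crandall_kocan_swiech,swiech1997}) then yields an $L^p$-viscosity solution $v_R\in C(\overline{B_R})$ of $F(x,v,Dv,D^2v)=f(x)$ in $B_R$ with $v_R=0$ on $\partial B_R$. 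Since $-M/\rho$ and $M/\rho$ belong to the Perron class of admissible sub- and supersolutions for zero boundary data, one gets the a priori bound $\|v_R\|_{L^\infty(\overline{B_R})}\le M/\rho$, uniformly in $R$.

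Next I would invoke the interior Hölder estimate for $L^p$-viscosity solutions of uniformly elliptic equations with measurable ingredients \cite{caffarelli_c_s} (of Krylov--Safonov type, which does not require continuity of the coefficients): for every $r>0$ there are $\alpha\in(0,1)$ and $C>0$, depending only on $n,p,\lambda,r,M,\rho$, such that $\|v_R\|_{C^{0,\alpha}(\overline{B_r})}\le C$ for all $R\ge 2r$. By Arzel\`a--Ascoli and a diagonal argument along the exhaustion $B_1\subset B_2\subset\cdots$, a subsequence of $(v_R)_R$ converges locally uniformly on $\mathbb R^n$ to some $v\in C(\mathbb R^n)$ with $\|v\|_{L^\infty(\mathbb R^n)}\le M/\rho$, hence $v\in C_b(\mathbb R^n)$. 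Finally, since each $v_R$ solves the \emph{same} equation on $B_R$ and these domains exhaust $\mathbb R^n$, the stability of $L^p$-viscosity solutions under local uniform convergence \cite{caffarelli_c_s} shows that $v$ is an $L^p$-viscosity solution of \eqref{eq:HJB} on $\mathbb R^n$, which concludes the proof.

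I expect the delicate points to be the verification of the hypotheses of Perron's method of \cite{koike_perron} on each $B_R$ --- namely the structure conditions, the construction of boundary barriers, and the comparison principle on bounded domains on which the method rests (which is where the global uniform ellipticity of Assumption \ref{hp:uniform_ellipticity_strong} and the local boundedness of the coefficients enter) --- together with the uniformity in $R$ of the interior Hölder estimate, needed so that the local uniform limit exists on all of $\mathbb R^n$. Note that, since the continuity Assumption \ref{hp:sigma_uniformly_continuous} is not imposed here, one cannot use the $W^{2,p}$-estimates of \cite{swiech1997} as in Proposition \ref{th:existence_uniqueness_Lp_viscosity}, so only Hölder estimates are available; this is consistent with the conclusion, which asserts $v\in C_b(\mathbb R^n)$ but not $v\in W^{2,p}_{\rm loc}(\mathbb R^n)$.
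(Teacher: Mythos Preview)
Your argument is essentially correct, but it takes a genuinely different and considerably more elaborate route than the paper. The paper applies Perron's method \emph{directly on $\mathbb R^n$}: using the same constant sub/supersolutions $\underline v=-M/\rho$, $\overline v=M/\rho$, it sets $v(x):=\sup\{u(x):u\in C(\mathbb R^n)\text{ is an }L^p\text{-viscosity subsolution of \eqref{eq:HJB}},\ \underline v\le u\le\overline v\}$ and invokes \cite[Theorems~3.3,~4.1]{koike_perron} to conclude that this Perron envelope is an $L^p$-viscosity solution in $C_b(\mathbb R^n)$. No Dirichlet problems, no H\"older estimates, no Arzel\`a--Ascoli, and no stability argument are needed, because Koike's Perron method is local in nature (the bump argument only requires solvability of the Dirichlet problem on small balls, which follows from the structure condition and global uniform ellipticity). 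By contrast, you solve auxiliary Dirichlet problems on each $B_R$, extract a locally uniform limit via Krylov--Safonov interior H\"older estimates, and then pass to the limit using stability of $L^p$-viscosity solutions. What your approach buys is an explicit decomposition of the roles of the hypotheses (ellipticity for the structure condition, boundedness of $l$ for the barrier constants, etc.), and it would extend to situations where one cannot define a global Perron class; what the paper's approach buys is brevity and the avoidance of the compactness/stability machinery. One minor point: your appeal to a comparison principle from \cite{crandall_kocan_swiech,swiech1997} is not quite apt (the former is parabolic, the latter is about interior estimates), but this is harmless since Perron's method itself does not require a comparison principle---the ordering $\underline v\le v_R\le\overline v$ is built into the definition of the Perron class.
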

\begin{proof}
Let $C>0$ such that $|l(x,u)|\leq C$. Then $\underline v(x):=-C/\rho$, $\overline v(x):=C/\rho$ is a (classical) subsolution,  supersolution to \eqref{eq:HJB}, respectively. Then, setting 
$$
v(x):=\sup _{u \in \mathcal{S}} u(x),\quad  x \in \mathbb R^n, \quad \mathcal{S}:=\left\{ 
u \in C(\mathbb R^n) :
u \text { is an } L^p \text {-viscosity subsolution of  }\eqref{eq:HJB}, \underline{v} \leq u \leq \bar{v} \right\},
$$ 
which is a bounded function, we can apply \cite[Theorems 3.3, 4.1]{koike_perron} to have that $v$ is an $L^p$-viscosity solution of \eqref{eq:HJB}.
\end{proof}

\subsection{Viscosity solutions}
When $H$ is continuous, we will consider (standard) viscosity solutions.
 \begin{hypothesis}\label{hp:H_continuous}
Let $H$ be continuous.
\end{hypothesis}
\begin{prop}\label{th:existence_uniqueness_viscosity_infinite}
Let  Assumptions \ref{hp:measurability}, \ref{hp:locally_bounded_coeff}, \ref{hp:sigma_uniformly_continuous}, \ref{hp:uniform_ellipticity},  \ref{hp:H_continuous} hold. 
\begin{enumerate}[(i)]
\item Let  $v \in C(\mathbb R^n)$ be a  viscosity solution of \eqref{eq:HJB}. Then, for every $p\geq n$, we have $v\in W^{2,p}_{\rm loc}(\mathbb R^n)$  and it is a strong solution of  \eqref{eq:HJB}.
\item Conversely, let $p\geq n$ and  $v\in W^{2,p}_{\rm loc}(\mathbb R^n)$ such that it is a strong solution of  \eqref{eq:HJB}. Then $v$ is a  viscosity solution of \eqref{eq:HJB}. 
\end{enumerate}
\end{prop}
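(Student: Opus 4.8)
The plan is to deduce Proposition~\ref{th:existence_uniqueness_viscosity_infinite} from the already-established Proposition~\ref{th:existence_uniqueness_Lp_viscosity}, the only work being to pass between the notions of \emph{viscosity solution} and \emph{$L^p$-viscosity solution} when the Hamiltonian $H$ is continuous. The key fact is that, for a \emph{continuous} $F$, these two notions coincide: every $L^p$-viscosity solution is a viscosity solution (trivially, since $C^2$ test functions are $W^{2,p}_{\rm loc}$ test functions and for continuous $F$ the a.e.\ inequality at a touching $C^2$ function is just the pointwise one), and conversely every viscosity solution is an $L^p$-viscosity solution --- this last implication is exactly \cite[Lemma 2.6]{caffarelli_c_s}, which was already invoked in the proof of Proposition~\ref{th:existence_uniqueness_Lp_viscosity}(ii). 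So the strategy is: observe that Assumption~\ref{hp:H_continuous} makes $F(x,r,p,X)=\rho r-H(x,p,X)+H(x,0,0)$ continuous, use this equivalence to reduce to the $L^p$-viscosity setting, and then quote Proposition~\ref{th:existence_uniqueness_Lp_viscosity}.

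In detail, for part (i): let $v\in C(\mathbb R^n)$ be a viscosity solution of \eqref{eq:HJB}. Since $H$ is continuous (Assumption~\ref{hp:H_continuous}), $F$ is continuous, so by \cite[Lemma 2.6]{caffarelli_c_s} $v$ is an $L^p$-viscosity solution of \eqref{eq:HJB} for every $p\geq n$ (the lemma applies on each ball $B_R$, and being an $L^p$-viscosity solution is a local notion). Now I must check that the hypotheses of Proposition~\ref{th:existence_uniqueness_Lp_viscosity} are in force: Assumptions~\ref{hp:measurability}, \ref{hp:locally_bounded_coeff}, \ref{hp:sigma_uniformly_continuous}, \ref{hp:uniform_ellipticity} are assumed here verbatim, and Assumption~\ref{hp:H_measurable} follows from continuity of $H$ together with separability of $U\subset\mathbb R^n$ --- any countable dense subset $\overline U$ of $U$ realizes the infimum by continuity of $u\mapsto H_{\rm cv}(x,p,Z,u)$, so the hypothesis is satisfied. (This is essentially the remark right after Assumption~\ref{hp:H_measurable}; I would state it explicitly.) Hence Proposition~\ref{th:existence_uniqueness_Lp_viscosity}(i) applies and gives $v\in W^{2,p}_{\rm loc}(\mathbb R^n)$ and that $v$ is a strong solution of \eqref{eq:HJB}.

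For part (ii): let $p\geq n$ and $v\in W^{2,p}_{\rm loc}(\mathbb R^n)$ be a strong solution of \eqref{eq:HJB}. By Proposition~\ref{th:existence_uniqueness_Lp_viscosity}(ii), $v$ is an $L^p$-viscosity solution of \eqref{eq:HJB}. Since $F$ is continuous, an $L^p$-viscosity solution is in particular a (standard) viscosity solution: if $\varphi\in C^2$ touches $v$ from above at $x_0$, then $\varphi$ is an admissible $W^{2,p}_{\rm loc}$ test function, the defining $L^p$-viscosity inequality $\operatorname*{esssup}_{B_r(x_0)}\big(F(x,v(x),D\varphi(x),D^2\varphi(x))-f(x)\big)\geq 0$ holds for all small $r$, and letting $r\to 0$ and using continuity of $x\mapsto F(x,v(x),D\varphi(x),D^2\varphi(x))-f(x)$ (here $v$, $D\varphi$, $D^2\varphi$, $F$, $f$ are all continuous) yields the pointwise supersolution inequality at $x_0$; symmetrically for the subsolution side. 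Thus $v$ is a viscosity solution.

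I do not expect any genuine obstacle here: the proposition is a corollary of Proposition~\ref{th:existence_uniqueness_Lp_viscosity} plus the standard equivalence of the two solution notions under continuity of the coefficients. The only point requiring a line of care is verifying Assumption~\ref{hp:H_measurable} from Assumption~\ref{hp:H_continuous} (via separability of $U$), so that Proposition~\ref{th:existence_uniqueness_Lp_viscosity} is genuinely applicable; and making sure the ``$L^p$-viscosity $\Rightarrow$ viscosity when $F$ continuous'' direction is invoked correctly (it is immediate, but worth one sentence). Everything else is a direct citation.
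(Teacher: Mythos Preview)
Your proposal is correct and follows essentially the same route as the paper, which also reduces both parts to Proposition~\ref{th:existence_uniqueness_Lp_viscosity} via the equivalence of viscosity and $L^p$-viscosity solutions for continuous $F$ (packaged in the paper as Proposition~\ref{prop:equivalence_viscosity_Lpviscosity}). One minor quibble: your derivation of Assumption~\ref{hp:H_measurable} from continuity of $H$ is not sound as stated --- continuity of $H$ says nothing about continuity of $u\mapsto H_{\rm cv}(x,p,Z,u)$ --- but this is harmless, since that hypothesis is used in Proposition~\ref{th:existence_uniqueness_Lp_viscosity} only to ensure Borel measurability of $H$, which Assumption~\ref{hp:H_continuous} gives directly; the paper itself glosses over this point.
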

\begin{proof}
\begin{enumerate}[(i)]
\item  Due to Assumption \ref{hp:H_continuous}, we have that $F,f$  are continuous. Then, by Proposition \ref{prop:equivalence_viscosity_Lpviscosity}, for every $p\geq n$, the function $v$ is also  an $L^p$-viscosity solution of \eqref{eq:HJB_F_on_BR}. Therefore, we can use  Proposition \ref{th:existence_uniqueness_Lp_viscosity} to have the first claim.
\item by Proposition \ref{th:existence_uniqueness_Lp_viscosity}, we have  $v \in C(\mathbb R^n)$ is an $L^p$-viscosity solution of \eqref{eq:HJB}. Then, it follows that $v$ is a  viscosity solution of \eqref{eq:HJB}.
\end{enumerate}
\end{proof}


\section{Verification theorems}\label{sec:verification}
In the previous section,  we saw that, using the theory of $L^p$-viscosity solution, we can construct an $L^p$-viscosity solution of  the HJB equation  $v \in W^{2,n}_{\rm loc}$ (so that it is twice-differentiable a.e.), which is then also a strong solution.
Motivated by these facts, in this section we assume to be given  a strong (sub/super)solution $v \in W^{2,n}_{\rm loc}$ of the HJB equation; using  Dynkin's formula for $ W^{2,n}_{\rm loc}$-functions  (Theorem \ref{th:dynkin_formula}  \cite{krylov}), we prove  verification theorems providing necessary and sufficient conditions for optimality (i.e. Theorems \ref{th:verification}, \ref{th:verification_II}, \ref{th:verification_necessary}, together with Remark \ref{rem:strong_solutions}). As a consequence, such results hold for viscosity solutions or  $L^p$-viscosity solutions of the HJB equation (Corollaries \ref{cor:verification_viscosity_sufficient}, \ref{cor:verification_theorem_necessary_corollary}).    We will discuss in Remark \ref{rem:strong_solutions} the key role of sets of measure zero, where, contrary to the classical $C^2$-case, $v $ may not be differentiable or twice-differentiable.

In the following, given an admissible pair $(y(\cdot),u(\cdot))$, recall the definition of the stopping time $\tau^R$, $R>0$ given in \eqref{eq:def_stopping_time}. 
 \begin{theorem}[Verification, sufficient conditions for optimality]\label{th:verification} Let  Assumptions  \ref{hp:measurability}, \ref{hp:locally_bounded_coeff},  \ref{hp:uniform_ellipticity}, \ref{hp:state-eq}, \ref{hp:V}, \ref{hp:H_measurable} hold. Let  $v\in W^{2,n}_{\rm loc}(\mathbb R^n)$. Fix an initial state $x \in \mathbb R^n$ and let $(y(\cdot),u(\cdot))$ be an admissible pair. Consider the following conditions:
  \begin{equation}\label{eq:behaviour_Ev(y)}
  \begin{aligned}
  & \mathbb E \left[  \left | v(y(s) )\right | \right] < \infty,\quad \forall s >0, \quad  \lim_{s \to \infty} e^{-\rho s} \mathbb E \left[  \left | v(y(s) )\right | \right] =0,
 \end{aligned}
   \end{equation}
\begin{align}
&\rho v(y(s)) -H(y(s),Dv(y(s)),D^2v(y(s))) \leq 0,  & \mathbb P\textit{-a.s., for  a.e. } s \geq 0,\label{eq:HJB(y(t))leq0}\\
& \rho v(y(s)) -H(y(s),Dv(y(s)),D^2v(y(s))) \geq 0, & \mathbb P\textit{-a.s., for  a.e. } s  \geq 0,\label{eq:HJB(y(t))geq0}\\
& \rho v(y(s)) - H(y(s),Dv(y(s)),D^2v(y(s))) = 0, & \mathbb P\textit{-a.s., for  a.e. } s  \geq 0. \label{eq:HJB(y(t))=0}
\end{align}
 Then the following statements hold:
 \begin{enumerate}[(i)]
 \item Let \eqref{eq:behaviour_Ev(y)} and  \eqref{eq:HJB(y(t))leq0}  hold for every admissible pair  $(y(\cdot),u(\cdot))$;  then  
 \begin{equation}\label{eq:v_leq_V}
 v(x)\leq V(x) .
 \end{equation}
 \item Let  \eqref{eq:HJB(y(t))geq0}  hold for some admissible pair $(y(\cdot),u(\cdot))$; then, for every $R>0$,
  \begin{equation} \begin{aligned}\label{eq:v_geq_fundamental_rel}
v( x) &  \geq   \mathbb E \left[ e^{-\rho (t\wedge \tau^R)}  v(y(t \wedge \tau^R )) \right] +\mathbb E  \int_0^{t \wedge \tau^R} e^{-\rho s} l(y(s),u(s)) ds\\
  &\quad +\mathbb E  \int_0^{t \wedge \tau^R} e^{-\rho s}  \Big [H_{\rm cv}(y(s),D v(y(s))  ,D^2 v(y(s))  ,u(s)) -H\left (y(s),Dv(y(s), D^2 v(y(s)) \right)   \Big ]   ds.
 \end{aligned}
   \end{equation}
 \item Let  \eqref{eq:behaviour_Ev(y)} and \eqref{eq:HJB(y(t))=0}  hold for every admissible pair $(y(\cdot),u(\cdot))$. Let  $(y^*(\cdot),u^*(\cdot))$ be an admissible pair such that 
 \begin{small}
\begin{equation}\label{eq:sufficient_condition_optimality_argmax}
 u^*(s)\in  {\rm argmin}_{u \in U}   H_{\rm cv}(y^*(s),D v(y^*(s))  ,D^2 v(y^*(s))  ,u), \quad   \mathbb P\textit{-a.s., for  a.e. } s \geq 0.
 \end{equation}
    \end{small}
Then  $(y^*(\cdot),u^*(\cdot))$ is optimal and $v(x)=V(x)$.
 \end{enumerate}
 \end{theorem}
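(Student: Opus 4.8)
The plan is to prove all three statements via a single computation: applying Dynkin's formula for $W^{2,n}_{\rm loc}$-functions (Theorem~\ref{th:dynkin_formula}) to the process $s\mapsto e^{-\rho s}v(y(s))$ up to the stopping time $t\wedge\tau^R$. First I would note that on $[0,t\wedge\tau^R]$ the state $y(s)$ stays in $\overline{B_R}$, so $v$ restricted there is in $W^{2,n}(B_R)$ and the local boundedness Assumptions~\ref{hp:locally_bounded_coeff} together with local uniform ellipticity make the hypotheses of Dynkin's formula applicable. This yields
\begin{equation*}
\mathbb E\!\left[e^{-\rho(t\wedge\tau^R)}v(y(t\wedge\tau^R))\right] = v(x) + \mathbb E\!\int_0^{t\wedge\tau^R} e^{-\rho s}\Big(-\rho v(y(s)) + \mathcal L^{u(s)}v(y(s))\Big)\,ds,
\end{equation*}
where $\mathcal L^{u}v(x) := b(x,u)\cdot Dv(x) + \tfrac12\tr(\sigma\sigma^T(x,u)D^2v(x))$, so that $-\rho v + \mathcal L^{u}v = H_{\rm cv}(x,Dv,D^2v,u) - l(x,u) - \rho v$ after adding and subtracting $l$. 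Rearranging and adding $\mathbb E\int_0^{t\wedge\tau^R}e^{-\rho s}l(y(s),u(s))\,ds$ to both sides gives the fundamental identity
\begin{equation*}
v(x) = \mathbb E\!\left[e^{-\rho(t\wedge\tau^R)}v(y(t\wedge\tau^R))\right] + \mathbb E\!\int_0^{t\wedge\tau^R} e^{-\rho s}l(y(s),u(s))\,ds + \mathbb E\!\int_0^{t\wedge\tau^R} e^{-\rho s}\Big(\rho v - \mathcal L^{u(s)}v - l\Big)(y(s))\,ds,
\end{equation*}
and the last integrand equals $\rho v(y(s)) - H_{\rm cv}(y(s),Dv,D^2v,u(s))$. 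Writing this as $[\rho v - H(y(s),Dv,D^2v)] + [H(y(s),Dv,D^2v) - H_{\rm cv}(y(s),Dv,D^2v,u(s))]$ is the bridge to all three claims.

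For (ii), I drop the term $\rho v - H(y(s),Dv,D^2v)$ using \eqref{eq:HJB(y(t))geq0} (it is $\geq 0$, and we want a lower bound on $v(x)$), and since $H=\inf_u H_{\rm cv}\le H_{\rm cv}(\cdot,u(s))$ the bracket $H - H_{\rm cv}(\cdot,u(s))\le 0$; keeping it yields exactly \eqref{eq:v_geq_fundamental_rel}. For (i), I use \eqref{eq:HJB(y(t))leq0} so that $\rho v - H \le 0$, and again $H - H_{\rm cv}(\cdot,u(s))\le 0$, hence $v(x)\le \mathbb E[e^{-\rho(t\wedge\tau^R)}v(y(t\wedge\tau^R))] + \mathbb E\int_0^{t\wedge\tau^R}e^{-\rho s}l(y(s),u(s))\,ds$. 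Then I let $R\to\infty$ and $t\to\infty$: since $y$ has a.s. continuous paths, $\tau^R\to\infty$ a.s., and the first expectation is controlled by \eqref{eq:behaviour_Ev(y)} (uniform integrability coming from the bound $\mathbb E|v(y(s))|<\infty$ plus a localization/dominated convergence argument, using that $e^{-\rho(t\wedge\tau^R)}|v(y(t\wedge\tau^R))|$ can be dominated and that $e^{-\rho s}\mathbb E|v(y(s))|\to 0$), while the running-cost integral converges to $J(x,u(\cdot))$ by the admissibility integrability condition and dominated convergence. This gives $v(x)\le J(x,u(\cdot))$ for every admissible $u(\cdot)$, hence $v(x)\le V(x)$.

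For (iii), the condition \eqref{eq:HJB(y(t))=0} combined with the same fundamental identity (now the bracket $\rho v - H$ vanishes) gives, for \emph{every} admissible pair, $v(x) = \mathbb E[e^{-\rho(t\wedge\tau^R)}v(y(t\wedge\tau^R))] + \mathbb E\int_0^{t\wedge\tau^R}e^{-\rho s}l\,ds + \mathbb E\int_0^{t\wedge\tau^R}e^{-\rho s}[H(y(s),Dv,D^2v) - H_{\rm cv}(y(s),Dv,D^2v,u(s))]\,ds$; passing to the limit as in (i) shows $v(x) = J(x,u(\cdot)) + (\text{nonpositive remainder})\le J(x,u(\cdot))$, so $v\le V$, while for the special pair $(y^*,u^*)$ satisfying \eqref{eq:sufficient_condition_optimality_argmax} the remainder integrand vanishes a.e. (the argmin condition says $H_{\rm cv}(y^*(s),Dv,D^2v,u^*(s)) = \inf_{u}H_{\rm cv}(y^*(s),Dv,D^2v,u) = H(y^*(s),Dv,D^2v)$, $\mathbb P$-a.s.\ for a.e.\ $s$), giving $v(x) = J(x,u^*(\cdot))$; therefore $V(x)\le J(x,u^*(\cdot)) = v(x)\le V(x)$, so $v(x)=V(x)$ and $(y^*,u^*)$ is optimal.

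The main obstacle I anticipate is the passage to the limit in the boundary term $\mathbb E[e^{-\rho(t\wedge\tau^R)}v(y(t\wedge\tau^R))]$: one must justify exchanging limit and expectation simultaneously in $R$ and $t$ under only the a.e./a.s.\ integrability hypotheses \eqref{eq:behaviour_Ev(y)}, without pointwise growth control on $v$. The clean way is to first send $R\to\infty$ at fixed $t$ (dominated convergence on $\{\tau^R\ge t\}\uparrow\Omega$, using that $e^{-\rho(t\wedge\tau^R)}|v(y(t\wedge\tau^R))|\le \sup_{s\le t}e^{-\rho s}|v(y(s))|$ needs an integrable dominant — obtained by applying the identity with $l\equiv 0$ to see $\mathbb E\sup_{s\le t}|v(y(s))|<\infty$ via a martingale maximal inequality, or more simply by monotone/dominated convergence using $\mathbb E|v(y(t))|<\infty$ and continuity of paths), obtaining the identity at $\tau^R$ replaced by $+\infty$, i.e.\ on $[0,t]$; then send $t\to\infty$ using $e^{-\rho t}\mathbb E|v(y(t))|\to 0$ and $\mathbb E\int_0^\infty e^{-\rho s}|l(y(s),u(s))|\,ds<\infty$. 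A secondary subtlety, to be flagged (as the authors promise in Remark~\ref{rem:strong_solutions}), is that $Dv, D^2v$ are only defined a.e., so one must know that the occupation measure of $y(s)$ does not charge the null set where $v$ fails to be twice differentiable — this is exactly what local uniform ellipticity (Assumption~\ref{hp:uniform_ellipticity}) guarantees and is precisely the hypothesis under which Dynkin's formula for $W^{2,n}_{\rm loc}$-functions (Theorem~\ref{th:dynkin_formula}) is valid, so invoking that theorem already resolves it.
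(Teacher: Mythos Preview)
Your proposal is correct and follows essentially the same route as the paper: apply the $W^{2,n}_{\rm loc}$-Dynkin formula (Theorem~\ref{th:dynkin_formula}), add and subtract $l$ and $H$ to obtain the fundamental identity \eqref{eq:Ito_proof}, then exploit the sign of $\rho v - H$ and of $H - H_{\rm cv}$ in each of the three cases, passing to the limit $R\to\infty$, $t\to\infty$ via \eqref{eq:behaviour_Ev(y)} and dominated convergence. You are more explicit than the paper about the two-step limit in the boundary term and about the null-set issue (which the paper defers to Remark~\ref{rem:strong_solutions}), and in (iii) you re-derive $v\le V$ from the identity whereas the paper simply invokes part (i) and combines it with (ii) applied to $(y^*,u^*)$; these are cosmetic differences only.
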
 
 \begin{proof}
Let $x \in \mathbb R^n$ and let $(y(\cdot),u(\cdot))$ be an admissible pair for $x$ and let $R>0$.
 As $v \in W^{2,n}_{\rm loc}(\mathbb R^n)$, we can apply Dynkin's formula  for $W^{2,n}_{\rm loc}$-functions (Theorem \ref{th:dynkin_formula}) to obtain
 \begin{equation} \label{eq:Ito_proof}
 \begin{aligned}
v( x)
& =\mathbb E \left[ e^{-\rho (t\wedge \tau^R)}  v(y(t \wedge \tau^R )) \right] +\mathbb E  \int_0^{t \wedge \tau^R} e^{-\rho s}  \Big [ \rho  v(y(s))   -  b(y(s),u(s)) \cdot D  v(y(s))       \\
& \quad 
- \frac 1 2  \tr \left ( \sigma(y(s),u(s))\sigma(y(s),u(s))^T D^2 v(y(s)) \right ) \Big ]   ds\\
& = \mathbb E \left[ e^{-\rho (t\wedge \tau^R)}  v(y(t \wedge \tau^R )) \right] +\mathbb E  \int_0^{t \wedge \tau^R} e^{-\rho s} l(y(s),u(s)) ds\\
 &\quad +\mathbb E  \int_0^{t \wedge \tau^R} e^{-\rho s}  \Big [ \rho  v(y(s))    - H\left (y(s),Dv(y(s)), D^2 v(y(s))  \right)  \Big ]   ds \\
  &\quad +\mathbb E  \int_0^{t \wedge \tau^R} e^{-\rho s}  \Big [H\left (y(s),Dv(y(s), D^2 v(y(s)) \right)-H_{\rm cv}(y(s),D v(y(s))  ,D^2 v(y(s))  ,u(s))    \Big ]   ds,
\end{aligned}
 \end{equation}
where in the second equality we added and subtracted inside the parenthesis the quantities $l(y(s),u(s))$ and $H\left (y(s),Dv(y(s), D^2 v(y(s))  \right)$ and we have used the definition of $H_{\rm cv}$. Next, we proceed as follows:
  \begin{enumerate}[(i)]
  \item for the first point, using \eqref{eq:HJB(y(t))leq0} and the fact that  $H\left (x,p, Z \right)-H_{\rm cv}\left (x,p, Z ,u \right) \leq 0$, we have
 \begin{align*}
v( x) \leq \mathbb E \left[ e^{-\rho (t\wedge \tau^R)}  v(y(t \wedge \tau^R )) \right] +\mathbb E  \int_0^{t \wedge \tau^R} e^{-\rho s} l(y(s),u(s)) ds.
 \end{align*}
Sending $R \to \infty$, $t \to \infty$, by \eqref{eq:behaviour_Ev(y)}  and the dominated convergence theorem, we  obtain
  \begin{align*}
v( x) \leq J(x,u(\cdot))
\end{align*}
and the claim follows by taking $\inf_{u \in \mathcal U}.$
\item For the second point, using \eqref{eq:HJB(y(t))geq0}, we  obtain the claim.
  \item For the third point, we have  $H\left (y^*(s),Dv(y^*(s)), D^2 v(y^*(s))  \right) =
  H_{\rm cv}\left (y^*(s),Dv(y^*(s)), D^2 v(y^*(s)),u^*(s)  \right) $. Therefore, considering \eqref{eq:v_geq_fundamental_rel} with $u(\cdot)=u^*(\cdot)$, letting  $R \to \infty$, $t \to \infty$, and using \eqref{eq:behaviour_Ev(y)}, we have $v(x) \geq J(x,u^*(\cdot))$. Taking also into account \eqref{eq:v_leq_V}, we have
  $$V(x)\geq v(x) \geq J(x,u^*(\cdot)).$$
This implies the claim.
 \end{enumerate}
 \end{proof}
 \begin{remark}\label{rem:strong_solutions}
 Assume that $v\in W^{2,n}_{\rm loc}(\mathbb R^n)$ is a strong subsolution (resp. supersolution, resp. solution) of \eqref{eq:HJB}. Then, for any  admissible pair $(y(\cdot),u(\cdot))$, we have  \eqref{eq:HJB(y(t))leq0} (resp. \eqref{eq:HJB(y(t))geq0}, resp. \eqref{eq:HJB(y(t))=0}). Indeed, denoting by $N$, the Lebesgue null set where $v$ is not twice-differentiable (and the corresponding HJB inequality does not hold), the claims follow thanks to Remark \ref{rem:y_not_in_N}.  This is a key difference with respect to the classical case where $H$ is continuous, $v \in C^2$, the HJB inequalities are satisfied for all $x \in \mathbb R^n$ ($N =\emptyset$). 
 \end{remark}
As a corollary these results hold when $v \in C(\mathbb R^n)$ is an $L^p$-viscosity solution or a viscosity solution, as in this case $v$ turns out to be automatically in $W^{2,n}_{\rm loc}$  (see Section \ref{sec:HJB}).
 \begin{coroll}\label{cor:verification_viscosity_sufficient}
  Let  Assumptions  \ref{hp:measurability}, \ref{hp:locally_bounded_coeff}, \ref{hp:sigma_uniformly_continuous}, \ref{hp:uniform_ellipticity}, \ref{hp:state-eq}, \ref{hp:V}, \ref{hp:H_measurable} hold.  Let $v \in C(\mathbb R^n)$. Fix $x \in \mathbb R^n$ and let \eqref{eq:behaviour_Ev(y)} be satisfied for every admissible pair $(y(\cdot),u(\cdot))$. 
 \begin{enumerate}[(a)]
 \item Let $p \geq n$ and assume that $v $ is an $L^p$-viscosity solution of \eqref{eq:HJB}. Then the conclusions of Theorem \ref{th:verification} (i), (ii), (iii) hold.
 \item  In addition, let Assumption \ref{hp:H_continuous} hold. Assume that $v $ is  a viscosity solution of \eqref{eq:HJB}.  Then the conclusions of Theorem \ref{th:verification} (i), (ii), (iii) hold.
 \end{enumerate}
 \end{coroll}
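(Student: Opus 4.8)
The plan is to reduce Corollary \ref{cor:verification_viscosity_sufficient} to Theorem \ref{th:verification} by verifying that, under the stated hypotheses, the viscosity/$L^p$-viscosity solution $v$ enjoys the regularity required to apply the verification theorem, namely $v \in W^{2,n}_{\rm loc}(\mathbb R^n)$, together with the pointwise-a.e.\ HJB (in)equalities \eqref{eq:HJB(y(t))leq0}--\eqref{eq:HJB(y(t))=0} along trajectories. The key observation is that Theorem \ref{th:verification} only uses $v \in W^{2,n}_{\rm loc}$ and the validity of the relevant HJB relation (as a strong sub/super/solution), plus the growth condition \eqref{eq:behaviour_Ev(y)}, which is assumed here directly.

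For part (a), I would argue as follows. The hypotheses of Corollary \ref{cor:verification_viscosity_sufficient}(a) include Assumptions \ref{hp:measurability}, \ref{hp:locally_bounded_coeff}, \ref{hp:sigma_uniformly_continuous}, \ref{hp:uniform_ellipticity}, \ref{hp:H_measurable}, which are precisely the hypotheses of Proposition \ref{th:existence_uniqueness_Lp_viscosity}. Since $v \in C(\mathbb R^n)$ is an $L^p$-viscosity solution of \eqref{eq:HJB} for some $p \geq n$, Proposition \ref{th:existence_uniqueness_Lp_viscosity}(i) gives $v \in W^{2,p}_{\rm loc}(\mathbb R^n) \subseteq W^{2,n}_{\rm loc}(\mathbb R^n)$ and moreover $v$ is a strong solution of \eqref{eq:HJB}. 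By Remark \ref{rem:strong_solutions}, being a strong solution means that for any admissible pair $(y(\cdot),u(\cdot))$ relations \eqref{eq:HJB(y(t))leq0}, \eqref{eq:HJB(y(t))geq0} and \eqref{eq:HJB(y(t))=0} all hold $\mathbb P$-a.s.\ for a.e.\ $s \geq 0$ (the point being that trajectories a.s.\ avoid the Lebesgue-null bad set $N$, by Remark \ref{rem:y_not_in_N}). The remaining hypotheses \ref{hp:state-eq}, \ref{hp:V} of Theorem \ref{th:verification} are assumed, and \eqref{eq:behaviour_Ev(y)} is assumed directly in the corollary. Hence all hypotheses of Theorem \ref{th:verification}(i), (ii), (iii) are met, and the conclusions follow verbatim.

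For part (b), under the additional Assumption \ref{hp:H_continuous} the hypotheses match those of Proposition \ref{th:existence_uniqueness_viscosity_infinite}. Since $v \in C(\mathbb R^n)$ is a viscosity solution of \eqref{eq:HJB}, Proposition \ref{th:existence_uniqueness_viscosity_infinite}(i) yields $v \in W^{2,p}_{\rm loc}(\mathbb R^n)$ for every $p \geq n$ and $v$ is a strong solution of \eqref{eq:HJB}. From here the argument is identical to part (a): apply Remark \ref{rem:strong_solutions} to obtain \eqref{eq:HJB(y(t))leq0}--\eqref{eq:HJB(y(t))=0} along every admissible pair, and invoke Theorem \ref{th:verification}(i), (ii), (iii).

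The argument is essentially a bookkeeping reduction, so there is no deep obstacle; the only point requiring care is making sure the hypothesis lists line up exactly — in particular that Corollary \ref{cor:verification_viscosity_sufficient}(a) carries Assumption \ref{hp:sigma_uniformly_continuous} (needed for Proposition \ref{th:existence_uniqueness_Lp_viscosity} but not listed in the bare-bones version of Theorem \ref{th:verification}), and that the growth/integrability condition \eqref{eq:behaviour_Ev(y)}, which Proposition \ref{th:existence_uniqueness_Lp_viscosity} does not provide, is supplied as a standing assumption of the corollary. I would also explicitly note the use of Remark \ref{rem:strong_solutions} to pass from ``$v$ is a strong solution'' to the trajectory-wise relations \eqref{eq:HJB(y(t))leq0}--\eqref{eq:HJB(y(t))=0}, since this is the one place where the a.e.\ nature of the HJB equation interacts non-trivially with the stochastic dynamics.
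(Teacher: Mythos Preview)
Your proposal is correct and follows essentially the same approach as the paper: use Proposition \ref{th:existence_uniqueness_Lp_viscosity} (resp.\ Proposition \ref{th:existence_uniqueness_viscosity_infinite}) to upgrade the $L^p$-viscosity (resp.\ viscosity) solution to a strong solution in $W^{2,p}_{\rm loc}$, invoke Remark \ref{rem:strong_solutions} to obtain the trajectory-wise HJB relations, and then apply Theorem \ref{th:verification}. Your write-up is more explicit about the hypothesis bookkeeping and the role of Remark \ref{rem:y_not_in_N}, but the logical skeleton is identical to the paper's proof.
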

 \begin{proof}
 \begin{enumerate}[(a)]
 \item By Proposition \ref{th:existence_uniqueness_Lp_viscosity}, we have  $v\in W^{2,p}_{\rm loc}(\mathbb R^n)$  and it is a strong solution to \eqref{eq:HJB}. Then, taking into account Remark \ref{rem:strong_solutions}, the claim follows by Theorem \ref{th:verification}.
 \item  By Proposition \ref{th:existence_uniqueness_viscosity_infinite}, we can proceed as for point (a) to have the claim of (b).
 \end{enumerate}
 \end{proof}
\begin{remark}
  Theorem \ref{th:verification} (together with Remark \ref{rem:strong_solutions}) and Corollary \ref{cor:verification_viscosity_sufficient} can be used to obtain uniqueness of $W^{2,n}$-strong solutions and $L^p$-viscosity solutions of the HJB equation \ref{eq:HJB}, respectively, {see Corollary \ref{coroll:existence_uniqueness}.} 
 \end{remark}

When $v$ is the value function $V$ we have the following further results.
 \begin{theorem}[Verification, sufficient conditions for optimality II]\label{th:verification_II}  Let  Assumptions  \ref{hp:measurability}, \ref{hp:locally_bounded_coeff},  \ref{hp:uniform_ellipticity}, \ref{hp:state-eq}, \ref{hp:V}, \ref{hp:H_measurable} hold. Assume that  $V\in W^{2,n}_{\rm loc}(\mathbb R^n)$.  Fix $x \in \mathbb R^n$ and let $(y^*(\cdot),u^*(\cdot))$ be an admissible pair.  Let    \eqref{eq:behaviour_Ev(y)} and \eqref{eq:HJB(y(t))geq0} hold for $v=V$ and $(y(\cdot),u(\cdot))= (y^*(\cdot),u^*(\cdot))$;  assume \eqref{eq:sufficient_condition_optimality_argmax}.
 Then $(y^*,u^*(\cdot))$ is optimal. 
 \end{theorem}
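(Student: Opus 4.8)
The plan is to deduce this from the fundamental relation already established in Theorem \ref{th:verification}(ii), exploiting the fact that when $v=V$ the inequality $v(x)\le V(x)$ is automatic, so that only the reverse inequality $V(x)\ge J(x,u^*(\cdot))$ has to be produced. Unlike in Theorem \ref{th:verification}(iii), this reverse inequality requires the supersolution property \eqref{eq:HJB(y(t))geq0} only along the single pair $(y^*(\cdot),u^*(\cdot))$, which is exactly what is assumed here. Concretely, I would first record that $V(x)=\inf_{u(\cdot)\in\mathcal U}J(x,u(\cdot))\le J(x,u^*(\cdot))$ by the very definition of the value function (and $J(x,u^*(\cdot))$ is finite by admissibility, Definition \ref{def:weak-form}), so that it only remains to show $V(x)\ge J(x,u^*(\cdot))$.

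For this reverse inequality, since the standing hypotheses coincide with those of Theorem \ref{th:verification} and \eqref{eq:HJB(y(t))geq0} is assumed for $v=V$ along $(y^*(\cdot),u^*(\cdot))$, Theorem \ref{th:verification}(ii) yields, for every $R>0$, inequality \eqref{eq:v_geq_fundamental_rel} with $v=V$ and $u(\cdot)=u^*(\cdot)$. Next I would invoke \eqref{eq:sufficient_condition_optimality_argmax}: since $H(x,p,Z)=\inf_{u\in U}H_{\rm cv}(x,p,Z,u)$ and $u^*(s)$ realizes this infimum at $(y^*(s),DV(y^*(s)),D^2V(y^*(s)))$ for a.e.\ $s\ge 0$, $\mathbb P$-a.s., the integrand $H_{\rm cv}(y^*(s),DV(y^*(s)),D^2V(y^*(s)),u^*(s))-H(y^*(s),DV(y^*(s)),D^2V(y^*(s)))$ vanishes, so \eqref{eq:v_geq_fundamental_rel} collapses to
\[
V(x)\ \ge\ \mathbb E\big[e^{-\rho(t\wedge\tau^R)}V(y^*(t\wedge\tau^R))\big]+\mathbb E\int_0^{t\wedge\tau^R}e^{-\rho s}l(y^*(s),u^*(s))\,ds .
\]
Finally I would let $R\to\infty$ and then $t\to\infty$: since $y^*$ has $\mathbb P$-a.s.\ continuous paths, $\tau^R\uparrow\infty$ $\mathbb P$-a.s., and using \eqref{eq:behaviour_Ev(y)} (which kills the boundary term) together with the admissibility bound $\mathbb E\int_0^\infty e^{-\rho s}|l(y^*(s),u^*(s))|\,ds<\infty$, dominated convergence gives $V(x)\ge\mathbb E\int_0^\infty e^{-\rho s}l(y^*(s),u^*(s))\,ds=J(x,u^*(\cdot))$. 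Combined with the trivial inequality above, $V(x)=J(x,u^*(\cdot))$, i.e.\ $(y^*(\cdot),u^*(\cdot))$ is optimal.

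The only genuinely delicate point is the limit passage in the boundary term $\mathbb E\big[e^{-\rho(t\wedge\tau^R)}V(y^*(t\wedge\tau^R))\big]$: one first sends $R\to\infty$ at fixed $t$ — using path continuity, continuity of $V$ (which holds since $V\in W^{2,n}_{\rm loc}\subset C^{0,\alpha}_{\rm loc}$), the first bound in \eqref{eq:behaviour_Ev(y)}, and a dominated-convergence / uniform-integrability argument — to obtain $\mathbb E[e^{-\rho t}V(y^*(t))]$, and then sends $t\to\infty$ via the second bound in \eqref{eq:behaviour_Ev(y)}. This is precisely the limiting scheme already carried out in the proof of Theorem \ref{th:verification}(i),(iii), so it introduces nothing new; once Theorem \ref{th:verification}(ii) is in hand the statement is essentially immediate, the substance being the observation that the argmin condition \eqref{eq:sufficient_condition_optimality_argmax} annihilates the Hamiltonian-gap term and that the missing inequality $v\le V$ is free when $v=V$.
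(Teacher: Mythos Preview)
Your proof is correct and follows essentially the same route as the paper: invoke \eqref{eq:v_geq_fundamental_rel} from Theorem \ref{th:verification}(ii) with $v=V$ and $(y^*(\cdot),u^*(\cdot))$, use \eqref{eq:sufficient_condition_optimality_argmax} to annihilate the Hamiltonian-gap term, and pass to the limit $R\to\infty$, $t\to\infty$ via \eqref{eq:behaviour_Ev(y)} to obtain $V(x)\ge J(x,u^*(\cdot))$. The only difference is that you spell out the trivial inequality $V(x)\le J(x,u^*(\cdot))$ and the limit passage in more detail, whereas the paper compresses the whole argument into one sentence.
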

 \begin{proof}
Consider \eqref{eq:v_geq_fundamental_rel} for $v=V$ and $(y(\cdot),u(\cdot))= (y^*(\cdot),u^*(\cdot))$, use  \eqref{eq:sufficient_condition_optimality_argmax}, and let $R\to \infty, t \to \infty$, then by  \eqref{eq:behaviour_Ev(y)} we have  $V(x)\geq J(x,u^*(\cdot)),$ from which the claim follows.
 \end{proof}
 \begin{theorem}[Verification, necessary conditions for optimality]\label{th:verification_necessary}
Let  Assumptions  \ref{hp:measurability}, \ref{hp:locally_bounded_coeff}, \ref{hp:uniform_ellipticity}, \ref{hp:state-eq}, \ref{hp:V}, \ref{hp:H_measurable} hold. Assume that  $V\in W^{2,n}_{\rm loc}(\mathbb R^n)$. Fix $x \in \mathbb R^n$ and let  $(y^*(\cdot),u^*(\cdot))$  be an optimal pair. Assume that \eqref{eq:behaviour_Ev(y)} and \eqref{eq:HJB(y(t))=0} are satisfied by $v=V$, $(y(\cdot),u(\cdot))=(y^*(\cdot),u^*(\cdot))$.
     Then,  we must have
 \begin{equation}\label{eq:necessary_condition_optimality}
 u^*(s)\in  {\rm argmin}_{u \in U}    H_{\rm cv}(y^*(s),D V(y^*(s))  ,D^2 V(y^*(s))  ,u),  \quad   \mathbb P\textit{-a.s., for  a.e. } s \geq 0.
 \end{equation}
 \end{theorem}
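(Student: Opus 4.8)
The plan is to run the same Dynkin-formula computation as in the proof of Theorem~\ref{th:verification}, but now to exploit that \eqref{eq:HJB(y(t))=0} is an \emph{equality} along the optimal pair and that $(y^*(\cdot),u^*(\cdot))$ is optimal; these two facts together will force the nonnegative ``Hamiltonian gap'' to integrate to zero, which is exactly \eqref{eq:necessary_condition_optimality}. Concretely: since $V\in W^{2,n}_{\rm loc}(\mathbb R^n)$, apply Dynkin's formula (Theorem~\ref{th:dynkin_formula}) to $e^{-\rho s}V(y^*(s))$ along $(y^*(\cdot),u^*(\cdot))$ up to $t\wedge\tau^R$, obtaining \eqref{eq:Ito_proof} with $v=V$ and $(y,u)=(y^*,u^*)$. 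By \eqref{eq:HJB(y(t))=0} (and Remark~\ref{rem:y_not_in_N}, which guarantees that $y^*(s)$ avoids, $\mathbb P\otimes ds$-a.e., the Lebesgue-null set where $DV,D^2V$ are undefined and HJB may fail) the term $\rho V(y^*(s))-H(y^*(s),DV(y^*(s)),D^2V(y^*(s)))$ vanishes. Hence, setting
\[
G(s):=H_{\rm cv}\big(y^*(s),DV(y^*(s)),D^2V(y^*(s)),u^*(s)\big)-H\big(y^*(s),DV(y^*(s)),D^2V(y^*(s))\big)\ \ge\ 0
\]
(nonnegativity because $H=\inf_{u\in U}H_{\rm cv}$), \eqref{eq:Ito_proof} collapses to
\[
V(x)+\mathbb E\int_0^{t\wedge\tau^R}e^{-\rho s}G(s)\,ds=\mathbb E\big[e^{-\rho(t\wedge\tau^R)}V(y^*(t\wedge\tau^R))\big]+\mathbb E\int_0^{t\wedge\tau^R}e^{-\rho s}l(y^*(s),u^*(s))\,ds .
\]

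Next I would pass to the limit, first $R\to\infty$ and then $t\to\infty$, exactly as in the proof of Theorem~\ref{th:verification}. Since $y^*$ has a.s.\ continuous paths, $\tau^R\uparrow\infty$ a.s.; the running-cost term converges to $J(x,u^*(\cdot))$ by dominated convergence, using that $(y^*,u^*)$ is admissible so $\mathbb E\int_0^\infty e^{-\rho s}|l(y^*(s),u^*(s))|\,ds<\infty$; the left-hand integral of $e^{-\rho s}G(s)\ge0$ is nondecreasing in $R$ and $t$, so by monotone convergence it tends to $\mathbb E\int_0^\infty e^{-\rho s}G(s)\,ds\in[0,+\infty]$; and the boundary term tends to $0$ by \eqref{eq:behaviour_Ev(y)}. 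This yields
\[
V(x)+\mathbb E\int_0^{\infty}e^{-\rho s}G(s)\,ds=J(x,u^*(\cdot)).
\]
Now invoke optimality, $J(x,u^*(\cdot))=V(x)$, to conclude $\mathbb E\int_0^\infty e^{-\rho s}G(s)\,ds=0$. Since $G\ge0$ and $e^{-\rho s}>0$, and since $G$ is jointly measurable in $(\omega,s)$ (progressive measurability of $y^*,u^*$, Borel measurability of $H_{\rm cv},H$ from Assumptions~\ref{hp:measurability}, \ref{hp:H_measurable}, and the a.e.-definition of $DV,D^2V$ together with Remark~\ref{rem:y_not_in_N}), Tonelli's theorem forces $G(s)=0$ for $\mathbb P\otimes ds$-a.e.\ $(\omega,s)$, i.e.\ $\mathbb P$-a.s.\ for a.e.\ $s\ge0$. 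That is $H_{\rm cv}(y^*(s),DV(y^*(s)),D^2V(y^*(s)),u^*(s))=\inf_{u\in U}H_{\rm cv}(y^*(s),DV(y^*(s)),D^2V(y^*(s)),u)$, which is precisely \eqref{eq:necessary_condition_optimality}.

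I expect the main obstacle to be, as in Theorem~\ref{th:verification}, the vanishing of the boundary term $\mathbb E[e^{-\rho(t\wedge\tau^R)}V(y^*(t\wedge\tau^R))]$ in the double limit: the inner limit $R\to\infty$ gives only a.s.\ convergence to $e^{-\rho t}V(y^*(t))$, and upgrading to convergence of expectations needs uniform integrability of $\{e^{-\rho(t\wedge\tau^R)}V(y^*(t\wedge\tau^R))\}_R$; this is what the first part of \eqref{eq:behaviour_Ev(y)} is designed to supply (together with $e^{-\rho(t\wedge\tau^R)}\le1$ and, if needed, a truncation exploiting the sign structure of the identity above), after which the outer limit $t\to\infty$ uses the decay statement in \eqref{eq:behaviour_Ev(y)}. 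A secondary technical point is to make sure, via Remark~\ref{rem:y_not_in_N}, that all the a.e.-in-$x$ objects $DV$, $D^2V$, and the HJB equality may legitimately be evaluated along $y^*(s)$ for $\mathbb P\otimes ds$-almost every $(\omega,s)$; the local uniform ellipticity Assumption~\ref{hp:uniform_ellipticity} is what makes this work.
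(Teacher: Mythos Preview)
Your proposal is correct and follows essentially the same approach as the paper: apply the Dynkin identity \eqref{eq:Ito_proof} with $v=V$ along the optimal pair, use \eqref{eq:HJB(y(t))=0} to kill the $\rho V-H$ term, pass to the limit using \eqref{eq:behaviour_Ev(y)} and admissibility, and compare with $V(x)=J(x,u^*(\cdot))$ to force the nonnegative Hamiltonian gap to vanish $\mathbb P\otimes ds$-a.e. Your write-up is in fact more careful than the paper's on two points (monotone convergence for the $G$-integral, and the measurability/null-set issue via Remark~\ref{rem:y_not_in_N}), and you correctly identify the passage $R\to\infty$ in the boundary term as the place where the argument is most delicate; the paper handles this step with the same appeal to \eqref{eq:behaviour_Ev(y)} and dominated convergence without further elaboration.
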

 \begin{proof}
 Since $(y^*(\cdot),u^*(\cdot))$ is an optimal pair, we must have $$V( x)=J(x,u^*(\cdot)).$$ 
 On the other hand, consider  \eqref{eq:Ito_proof}  for $v=V$ and $(y(\cdot),u(\cdot))=(y^*(\cdot),u^*(\cdot))$. Using  \eqref{eq:HJB(y(t))=0} and letting $R \to \infty$, $t \to \infty$ there, we have 
  \begin{equation*}
 \begin{aligned}
V( x) & = J(x,u^*(\cdot)) +\mathbb E  \int_0^{\infty } e^{-\rho s}  \Big [ H\left (y^*(s),DV(y^*(s), D^2 V(y^*(s))  \right) -H_{\rm cv}(y^*(s),D V(y^*(s))  ,D^2 V(y^*(s))  ,u^*(s))  \Big ]   ds.
\end{aligned}
 \end{equation*}
Comparing the two identities, we  have
  \begin{equation*}
 \begin{aligned}
0=\mathbb E  \int_0^{\infty } e^{-\rho s}  \Big [ H\left (y^*(s),DV(y^*(s), D^2 V(y^*(s))  \right)-H_{\rm cv}(y^*(s),D V(y^*(s))  ,D^2 V(y^*(s))  ,u^*(s))  \Big ]   ds
\end{aligned}
 \end{equation*}
Since  $0 \geq H\left (x,p, Z \right)-H_{\rm cv} \left (x,p, Z,u \right) ,$ for all $ x,p,Z,u$,  we
conclude that  \eqref{eq:necessary_condition_optimality} must hold.
 \end{proof}
 \begin{coroll}\label{cor:verification_theorem_necessary_corollary} Let  Assumptions \ref{hp:measurability}, \ref{hp:locally_bounded_coeff}, \ref{hp:sigma_uniformly_continuous}, \ref{hp:uniform_ellipticity}, \ref{hp:state-eq}, \ref{hp:V}, \ref{hp:H_measurable} hold.  Fix $x \in \mathbb R^n$ and let  $(y^*(\cdot),u^*(\cdot))$  be an optimal pair. Assume that  \eqref{eq:behaviour_Ev(y)} and \eqref{eq:HJB(y(t))=0} are satisfied by $v=V$, $(y(\cdot),u(\cdot))=(y^*(\cdot),u^*(\cdot))$. 
 Then the following statements hold:
 \begin{enumerate}[(a)]
 \item let $p \geq n$ and assume that $V $ is an $L^p$-viscosity solution of \eqref{eq:HJB}. Then the conclusion of Theorem \ref{th:verification_necessary} holds.
 \item  In addition, let Assumption \ref{hp:H_continuous} hold. Assume that $V$ is  a viscosity solution of \eqref{eq:HJB}.  Then the conclusion of Theorem \ref{th:verification_necessary} holds.
 \end{enumerate}
 \end{coroll}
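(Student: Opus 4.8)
The plan is to reduce both parts to Theorem \ref{th:verification_necessary} by upgrading the (viscosity or $L^p$-viscosity) solution $V$ to a $W^{2,n}_{\rm loc}$-strong solution, which is exactly what the regularity results of Section \ref{sec:HJB} deliver. There are no new ideas to introduce: the corollary is a direct composition of Proposition \ref{th:existence_uniqueness_Lp_viscosity} (resp. Proposition \ref{th:existence_uniqueness_viscosity_infinite}) with Theorem \ref{th:verification_necessary}.

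For part (a), first I would note that the standing assumptions of the corollary contain precisely those required by Proposition \ref{th:existence_uniqueness_Lp_viscosity}, namely \ref{hp:measurability}, \ref{hp:locally_bounded_coeff}, \ref{hp:sigma_uniformly_continuous}, \ref{hp:uniform_ellipticity} and \ref{hp:H_measurable}. Since an $L^p$-viscosity solution is continuous by definition (see Appendix \ref{sec:appendix_definitions}), $V \in C(\mathbb R^n)$ is an $L^p$-viscosity solution of \eqref{eq:HJB} for some $p \geq n$, so Proposition \ref{th:existence_uniqueness_Lp_viscosity}(i) yields $V \in W^{2,p}_{\rm loc}(\mathbb R^n) \subset W^{2,n}_{\rm loc}(\mathbb R^n)$ and that $V$ is a strong solution of \eqref{eq:HJB}. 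At this point all hypotheses of Theorem \ref{th:verification_necessary} are met: the structural assumptions, the membership $V \in W^{2,n}_{\rm loc}(\mathbb R^n)$, and the assumptions \eqref{eq:behaviour_Ev(y)} and \eqref{eq:HJB(y(t))=0} along the optimal pair $(y^*(\cdot),u^*(\cdot))$, which are imposed directly in the corollary. Hence \eqref{eq:necessary_condition_optimality} follows at once from Theorem \ref{th:verification_necessary}.

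For part (b), the argument is identical except that the passage from a (standard) viscosity solution to a $W^{2,n}_{\rm loc}$-strong solution is now supplied by Proposition \ref{th:existence_uniqueness_viscosity_infinite}(i): under the extra Assumption \ref{hp:H_continuous} (together with the assumptions already listed), any viscosity solution $V \in C(\mathbb R^n)$ of \eqref{eq:HJB} lies in $W^{2,p}_{\rm loc}(\mathbb R^n)$ for every $p \geq n$ and is a strong solution; taking $p=n$ gives $V \in W^{2,n}_{\rm loc}(\mathbb R^n)$, and one concludes via Theorem \ref{th:verification_necessary} exactly as in (a).

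The only point needing care is the bookkeeping of hypotheses: Theorem \ref{th:verification_necessary} itself does not require \ref{hp:sigma_uniformly_continuous} (nor, in (b), \ref{hp:H_continuous}), but these are precisely the extra assumptions needed to invoke the $W^{2,p}_{\rm loc}$-regularity of Section \ref{sec:HJB}. Once that regularity is secured there is no further obstacle, since the necessary-condition argument is purely probabilistic — Dynkin's formula along the stopping times $\tau^R$, the comparison of the two identities for $V(x)$, then $R\to\infty$, $t\to\infty$ using \eqref{eq:behaviour_Ev(y)} — and has already been carried out in the proof of Theorem \ref{th:verification_necessary}.
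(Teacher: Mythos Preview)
Your proposal is correct and follows exactly the paper's approach: upgrade $V$ to a $W^{2,n}_{\rm loc}$-strong solution via Proposition \ref{th:existence_uniqueness_Lp_viscosity} (for (a)) or Proposition \ref{th:existence_uniqueness_viscosity_infinite} (for (b)), then invoke Theorem \ref{th:verification_necessary}. The paper compresses this into a single line referring back to the proof of Corollary \ref{cor:verification_viscosity_sufficient}, but the content is the same.
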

 \begin{proof}For both points, we proceed as in the proof of Corollary \ref{cor:verification_viscosity_sufficient} and then apply Theorem \ref{th:verification_necessary}. 
 \end{proof}

\section{Optimal feedback controls and characterization of $V$ as the unique $L^p$-viscosity solution}\label{sec:synthesis}
In this section, we use the verification theorems of the previous section to solve the stochastic optimal control problem by constructing  optimal feedback controls.  
 
We assume the following.
\begin{hypothesis}\label{hp.compactness_U}
The infimum in the definition of the Hamiltonian is attained, i.e. 
\begin{align*}
H(x,p,Z) &:=  \min_{u\in U}  H_{\rm cv}(x,p,Z,u),   \quad  \forall x,p \in \mathbb R^n, Z \in S^{n}.
\end{align*}
\end{hypothesis}
\begin{hypothesis}\label{hp:v}
Let   $v\in W^{2,n}_{\rm loc}(\mathbb R^n)$ satisfying either one of the following conditions:
\begin{enumerate}[(a)]
\item $v$ is a strong solution of \eqref{eq:HJB} and   \eqref{eq:behaviour_Ev(y)}  holds for every admissible pair $(y(\cdot),u(\cdot))$. 
\item  $v=V$ and it is a strong supersolution of \eqref{eq:HJB}.
\end{enumerate}
\end{hypothesis}
\begin{flushleft}
    Recall, of course, Propositions \ref{th:existence_uniqueness_Lp_viscosity}, \ref{prop:existence_viscosity}, \ref{th:existence_uniqueness_viscosity_infinite} for sufficient conditions granting the existence of strong solution of \eqref{eq:HJB} in the case of $L^p$-viscosity solutions or viscosity solutions.
\end{flushleft}

Define the set-valued map 
  \begin{align}\label{eq:set_valued_map}
  \Psi \colon \mathbb R^n \to  \mathcal  P (U), \quad  \Psi(x)&: ={\rm argmin}_{u \in U}   H_{\rm cv}(x,Dv(x),D^2v(x),u)  \neq \emptyset, \quad  x \in \mathbb R^n,
  \end{align}
  which takes non-empty set values due to  Assumption \ref{hp.compactness_U}. A selection of the set-valued map $\Psi$ is a map $\psi \colon \mathbb R^n \to  U$  such that $\psi(x)\in \Psi(x)$ for {a.e.} $x \in \mathbb R^n$.
   \begin{prop}[Optimal feedback controls I]\label{cor:closed_loop}
Let  Assumptions \ref{hp:measurability}, \ref{hp:locally_bounded_coeff},  \ref{hp:uniform_ellipticity},  \ref{hp:V}, \ref{hp.compactness_U}, \ref{hp:v} hold  and let $x \in \mathbb R^n$.
 Assume that $\Psi$ has a Borel  measurable selection $\psi \colon \mathbb R^n \to  U$ such that the closed loop equation
  \begin{equation}
\label{eq:closed_loop}
dy(t) = b(y(t),\psi(y(t))  dt + \sigma(y(t),\psi(y(t))) \,dW(t), \quad y(0) = x,
\end{equation}
admits a solution $y(t)$ with continuous paths a.s.  in some generalized reference probability space $\eta$, such that 
\begin{equation}\label{eq:integrability_y_psi_y}
 \mathbb E\left[\int_0^{\infty} e^{-\rho t}\left |
l(y(t),\psi(y (t))) \right |dt \right] < \infty.
\end{equation}
Set $u(t) :=  \psi(y (t))$. In case (b) above, we also assume that   \eqref{eq:behaviour_Ev(y)}  holds for $(y(\cdot),u(\cdot))$. Then, in either cases (a), (b), the pair $(y(\cdot),u(\cdot))$ is optimal and $v(x)=V(x)$.
 \end{prop}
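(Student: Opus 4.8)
The plan is to verify that the pair $(y(\cdot),u(\cdot))$ constructed from the feedback selection $\psi$ falls into the scope of the verification theorems of Section \ref{sec:verification}, and then read off optimality from there. First I would observe that the pair $(y(\cdot),u(\cdot))$ with $u(t):=\psi(y(t))$ is an admissible pair for $x$: indeed $\psi$ is Borel measurable, $y(\cdot)$ solves the closed loop equation \eqref{eq:closed_loop} on some generalized reference probability space $\eta$ with continuous paths a.s., so $u(\cdot)$ is $\mathcal F_t$-progressively measurable, and the integrability condition \eqref{eq:integrability_y_psi_y} is precisely the requirement in Definition \ref{def:weak-form}. Hence $u(\cdot)\in\mathcal U$. (Note that $\mathcal U\neq\emptyset$ is then automatic, consistent with Assumption \ref{hp:state-eq} having been dropped here as announced in Section \ref{sec:synthesis}.)

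Next, by construction of $\Psi$ in \eqref{eq:set_valued_map}, for a.e.\ $x\in\mathbb R^n$ we have $\psi(x)\in\mathrm{argmin}_{u\in U}H_{\rm cv}(x,Dv(x),D^2v(x),u)$, and therefore $H_{\rm cv}(x,Dv(x),D^2v(x),\psi(x))=H(x,Dv(x),D^2v(x))$ for a.e.\ $x$. Evaluating along the trajectory and invoking Remark \ref{rem:y_not_in_N} (so that the Lebesgue-null exceptional set is hit by $y(s)$ only on a $\mathbb P\otimes\mathrm{d}s$-null set), we get
\begin{equation*}
H_{\rm cv}(y(s),Dv(y(s)),D^2v(y(s)),u(s)) = H(y(s),Dv(y(s)),D^2v(y(s))),\quad \mathbb P\textit{-a.s., for a.e. }s\geq 0,
\end{equation*}
which is exactly the pointwise identity \eqref{eq:sufficient_condition_optimality_argmax} (equivalently, \eqref{eq:necessary_condition_optimality} is forced in the necessary-conditions direction, but here it is an input). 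The exceptional-set bookkeeping via Remark \ref{rem:y_not_in_N} is the one genuinely delicate point: one must make sure that the a.e.-in-$x$ statements defining $\Psi$ and the strong (super)solution property of $v$ transfer to a.e.-in-$s$, $\mathbb P$-a.s.\ statements along $y$, and this is where local uniform ellipticity (Assumption \ref{hp:uniform_ellipticity}) enters, through the absolute continuity of the law of $y(s)$ that underlies Remark \ref{rem:y_not_in_N}.

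Finally I would split according to the two cases of Assumption \ref{hp:v}. In case (a), $v$ is a strong solution of \eqref{eq:HJB} and \eqref{eq:behaviour_Ev(y)} holds for every admissible pair; by Remark \ref{rem:strong_solutions} the equality \eqref{eq:HJB(y(t))=0} holds along every admissible pair, so all hypotheses of Theorem \ref{th:verification}(iii) are met with $(y^*(\cdot),u^*(\cdot))=(y(\cdot),u(\cdot))$, and we conclude that $(y(\cdot),u(\cdot))$ is optimal and $v(x)=V(x)$. In case (b), $v=V$ is a strong supersolution, so by Remark \ref{rem:strong_solutions} the inequality \eqref{eq:HJB(y(t))geq0} holds along $(y(\cdot),u(\cdot))$; together with the assumed \eqref{eq:behaviour_Ev(y)} for $(y(\cdot),u(\cdot))$ and the argmin relation \eqref{eq:sufficient_condition_optimality_argmax} established above, Theorem \ref{th:verification_II} applies and yields that $(y(\cdot),u(\cdot))$ is optimal; since $v=V$, the identity $v(x)=V(x)$ is trivial. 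The main obstacle, as noted, is the measure-zero transfer argument; the rest is a direct invocation of the already-proved verification theorems.
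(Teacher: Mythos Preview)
Your proposal is correct and follows essentially the same route as the paper: verify admissibility of $(y(\cdot),u(\cdot))$ from the Borel measurability of $\psi$ and the integrability hypothesis, establish the argmin relation \eqref{eq:sufficient_condition_optimality_argmax} along the trajectory, and then invoke Theorem \ref{th:verification}(iii) in case (a) and Theorem \ref{th:verification_II} in case (b), each via Remark \ref{rem:strong_solutions}. Your explicit appeal to Remark \ref{rem:y_not_in_N} for the null-set transfer is a welcome clarification that the paper leaves implicit under ``by construction''.
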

\begin{proof}
Note that, as $y(t)$ is $\mathcal F_t$-progressively measurable and $\psi$ is Borel measurable, we have that $u(\cdot)$ is progressively measurable. It follows that $y(t)$ is a solution to \eqref{eq:SDDE}  with $x, u(\cdot)$ in the generalized reference probability space $\eta$ and the pair $(u(\cdot),y(\cdot))$ is admissible (hence, Assumption \ref{hp:state-eq} is satisfied).  By construction, $u(\cdot)$ satisfies
  $$u(s)\in  {\rm argmin}_{u \in U}    H_{\rm cv}(y(s),D v(y(s))  ,D^2 v(y(s))  ,u), \quad   \mathbb P\textit{-a.s., for  a.e. } s \geq 0.$$
 Hence, by  Theorem \ref{th:verification} for case (a) and by Theorem \ref{th:verification_II} for case (b), together with Remark \ref{rem:strong_solutions}, we conclude that the pair $(y(\cdot),u(\cdot))$ is optimal and $v(x)=V(x)$. 
\end{proof}

In the following proposition,  under boundedness of $b,\sigma$,  we  solve the closed loop equation and construct optimal feedback laws. 
\begin{prop}[Optimal feedback controls II]\label{prop:optimal_feedback_example}
 Let  Assumptions \ref{hp:measurability}, \ref{hp:b,sigma,bounded}, \ref{hp:l_subpolynomial_growth}, \ref{hp:uniform_ellipticity_strong},  \ref{hp.compactness_U}, \ref{hp:v} and let $x \in \mathbb R^n$.  Then, there exists a Borel measurable selection $\psi \colon \mathbb R^n \to  U$ of $\Psi$ such that the closed loop equation \eqref{eq:closed_loop} has a solution $y(t)$ in some generalized reference probability space $\eta$. Moreover, in either cases (a), (b),  the pair  $(y(\cdot),u(\cdot))$, where  $u(t) :=  \psi(y (t)),$ is admissible and optimal and $v(x)=V(x)$.
 \end{prop}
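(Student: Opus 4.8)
The plan is to obtain the Borel measurable selection $\psi$ by a measurable selection theorem, then to solve the closed loop SDE \eqref{eq:closed_loop} via a weak-existence result for SDEs with merely measurable, bounded drift and uniformly elliptic diffusion, and finally to check the integrability condition \eqref{eq:integrability_y_psi_y} so that Proposition \ref{cor:closed_loop} applies verbatim. First I would construct $\psi$: under Assumption \ref{hp.compactness_U} the map $\Psi(x)={\rm argmin}_{u\in U}H_{\rm cv}(x,Dv(x),D^2v(x),u)$ is non-empty-valued; since $b,\sigma,l$ are Borel and $Dv,D^2v$ are (Borel) measurable functions defined a.e. (as $v\in W^{2,n}_{\rm loc}$), the function $(x,u)\mapsto H_{\rm cv}(x,Dv(x),D^2v(x),u)$ is Borel in $x$ for each $u$ and, restricting to the countable set $\overline U$ of Assumption \ref{hp:H_measurable} where relevant, one applies a standard Kuratowski--Ryll-Nardzewski / von Neumann type measurable selection theorem (or the Filippov-type implicit-function lemma) to extract a Borel $\psi$ with $\psi(x)\in\Psi(x)$ for a.e. $x$. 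Here boundedness of $b,\sigma$ (Assumption \ref{hp:b,sigma,bounded}) guarantees $b(\cdot,\psi(\cdot))$ and $\sigma(\cdot,\psi(\cdot))$ are bounded Borel functions.

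Next I would solve \eqref{eq:closed_loop}. The coefficients $\tilde b(x):=b(x,\psi(x))$ and $\tilde\sigma(x):=\sigma(x,\psi(x))$ are bounded and Borel, and $\tilde\sigma\tilde\sigma^T\ge\lambda I$ by Assumption \ref{hp:uniform_ellipticity_strong}; by the classical Stroock--Varadhan martingale-problem theory (well-posedness of the martingale problem for bounded measurable drift and continuous? — here only measurable — uniformly elliptic diffusion: one may first handle the case $\tilde\sigma$ continuous via Stroock--Varadhan and then add the bounded measurable drift by Girsanov, or invoke Krylov's results on SDEs with measurable coefficients), there exists a weak solution $y(\cdot)$ with a.s. continuous paths on some generalized reference probability space $\eta$. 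Setting $u(t):=\psi(y(t))$, progressive measurability of $u(\cdot)$ follows as in the proof of Proposition \ref{cor:closed_loop}.

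Then I would verify \eqref{eq:integrability_y_psi_y}: by Assumption \ref{hp:l_subpolynomial_growth}, $|l(y(t),u(t))|\le C(1+|y(t)|^m)$, so it suffices to bound $\mathbb E[|y(t)|^m]$ by something growing at most polynomially in $t$; since $\tilde b,\tilde\sigma$ are bounded, a routine application of the Burkholder--Davis--Gundy and Gronwall inequalities gives $\mathbb E[\sup_{s\le t}|y(s)|^m]\le C'(1+|x|^m)(1+t^{m})$ (or an exponential bound, which is still dominated by $e^{\rho t/2}$ after integration), hence $\mathbb E\int_0^\infty e^{-\rho t}|l(y(t),u(t))|\,dt<\infty$ because $\rho>0$. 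In case (b) the extra requirement \eqref{eq:behaviour_Ev(y)} for this particular $(y(\cdot),u(\cdot))$ is assumed as a hypothesis of Proposition \ref{cor:closed_loop} (it is inherited here), so nothing more is needed. Finally, all hypotheses of Proposition \ref{cor:closed_loop} are met — Assumptions \ref{hp:b,sigma,bounded} and \ref{hp:l_subpolynomial_growth} imply \ref{hp:locally_bounded_coeff}, and \ref{hp:uniform_ellipticity_strong} implies \ref{hp:uniform_ellipticity} — so Proposition \ref{cor:closed_loop} yields that $(y(\cdot),u(\cdot))$ is admissible and optimal and $v(x)=V(x)$.

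The main obstacle I expect is the weak well-posedness of the closed loop SDE \eqref{eq:closed_loop} with a merely Borel diffusion coefficient $\tilde\sigma$: standard Stroock--Varadhan theory requires $\tilde\sigma$ continuous, so either the paper must additionally assume (or derive) some continuity of $x\mapsto\sigma(x,\psi(x))$ — which is delicate since $\psi$ is only Borel — or it must appeal to a more refined existence result (e.g. Krylov-type estimates, or the fact that $L^p$-viscosity/Sobolev regularity of $v$ makes the optimizer $\psi$ inherit enough structure). I would flag this as the key technical point and, in the write-up, cite the precise existence theorem being used; everything else (the selection theorem, the moment bound, the reduction to Proposition \ref{cor:closed_loop}) is routine.
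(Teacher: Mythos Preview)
Your plan matches the paper's proof in structure: measurable selection, then weak existence for the closed-loop SDE, then a moment bound to get \eqref{eq:integrability_y_psi_y}, then invoke Proposition~\ref{cor:closed_loop}. Your ``main obstacle'' is exactly right, and the resolution is the one you guessed: the paper cites Krylov \cite[Theorem~1, p.~87]{krylov}, which gives weak existence for SDEs with bounded Borel drift and bounded Borel uniformly elliptic diffusion --- no continuity of $\tilde\sigma$ is needed, so Stroock--Varadhan is not the right tool here.

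The one place where your write-up diverges from the paper is the selection step. You propose Kuratowski--Ryll-Nardzewski or Filippov, but without continuity of $H_{\rm cv}$ in $u$ (which is not assumed) and without knowing $\Psi$ has closed values or a Borel graph, those theorems do not apply cleanly. The paper instead uses the lower-semianalytic framework (Appendix~\ref{app:measurable-selection}, after \cite{bersektas}): since $(x,u)\mapsto H_{\rm cv}(x,Dv(x),D^2v(x),u)$ is Borel, hence lower semianalytic, Theorem~\ref{th:meas_selection_univ} produces a \emph{universally} measurable selector, which is then modified on a Lebesgue-null set to a Borel map $\tilde\psi$. This is more robust under the stated hypotheses and does not rely on Assumption~\ref{hp:H_measurable}, which is not among the hypotheses of the proposition. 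Finally, the paper also notes that the moment bound $\mathbb E[|y(t)|^m]\le C t^m(1+|x|^m)$ simultaneously verifies Hypothesis~\ref{hp:V} (needed to invoke Proposition~\ref{cor:closed_loop}); in case~(b) the same bound together with the polynomial growth of $V$ gives \eqref{eq:behaviour_Ev(y)}, so it is not merely ``inherited'' but actually established here.
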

 \begin{proof}
{By Remark \ref{rem:borel_meas_implies_lsa}, we can apply Theorem \ref{th:meas_selection_univ}, to have that $\Psi$  admits a universally measurable selection $\psi \colon \mathbb R^n \to  U$. Then,  we can find a Borel measurable map $\tilde \psi \colon \mathbb R^n \to  U$ such that $\tilde \psi= \psi$, a.e. on $\mathbb R^n$ (this can be seen, e.g., by writing $\mathbb R^n=\bigcup_{k\in \mathbb N}C_k$, where $\{C_k\}_k\subset \mathbb R^n$ is a disjoint sequence of hypercubes of hypervolume $1$ in the Lebesgue measure covering $\mathbb R^n$, so that,  by \cite[Exercise 1, p. 265]{cohn},  we can find Borel measurable functions $\tilde \psi_k:\mathbb R^n\to U$, such that $\tilde \psi_k= \psi$ on $C_k$, a.e., for all $k$;  finally we define $\tilde \psi(x)=\tilde \psi_k(x)$,  $x\in C_k,k\in \mathbb N$).} In turn, the functions 
$$\tilde b \colon \mathbb R^n \to \mathbb R^n, \quad \tilde b(x):=b(x,{\tilde \psi}(x)), \quad \tilde \sigma \colon \mathbb R^n \to \mathbb R^{n\times n}, \quad  \tilde \sigma(x):=\sigma(x,{\tilde \psi}(x))$$ are Borel  measurable and bounded. Hence, thanks to Assumption \ref{hp:uniform_ellipticity_strong}, we can apply \cite[Theorem 1, p.87]{krylov}  to obtain the existence of a continuous solution $y(t)$ to the closed loop equation \eqref{eq:closed_loop} in some generalized reference probability space $\eta$. 

Next, we show that Hypothesis \ref{hp:V} and 
\eqref{eq:integrability_y_psi_y} hold. Indeed, due to the boundedness of $b, \sigma,$ for any solution $y(t)$ of the state equation \eqref{eq:SDDE} with control process $u(t)$ and initial datum $x$, for all $\bar m\geq 1$,  we have
\begin{equation}\label{eq:state_est}
    \mathbb E[|y(t)|^{\bar m}] \leq  C {(t^{\bar m}+t^{\bar m/2}) } (1+|x|^{\bar m})
\end{equation}
 ($C>0$ independent of $u(\cdot),x$), so that, thanks to Assumption \ref{hp:l_subpolynomial_growth},
\begin{align*}
\mathbb E\left[\int_0^{\infty} e^{-\rho t}\left |
l(y(t),u(t)) \right | dt \right]  \leq C \mathbb \int_0^{\infty} e^{-\rho t}
(1+E\left[|y(t)|^m\right]) dt\leq C  (1+|x|^m) \mathbb \int_0^{\infty} e^{-\rho t} {(t^{m}+t^{ m/2}) } dt   \leq C (1+|x|^m).
\end{align*}
This implies that Hypothesis \ref{hp:V} is satisfied; taking $u(t):=\psi(y(t))$, where $y(t)$ is the solution to the closed loop equation \eqref{eq:closed_loop}, then \eqref{eq:integrability_y_psi_y} also holds. Therefore, the statement of the proposition follows by Proposition \ref{cor:closed_loop}.
 \end{proof}
  
{\begin{coroll}[Existence and uniqueness of $L^p$-viscosity solutions]\label{coroll:existence_uniqueness}Let  Assumptions \ref{hp:measurability}, \ref{hp:b,sigma,bounded}, \ref{hp:l,bounded},  \ref{hp:sigma_uniformly_continuous}, \ref{hp:uniform_ellipticity_strong},  \ref{hp:H_measurable}, \ref{hp.compactness_U}. Let  $p \geq n$. Then $V$ is the unique $L^p$-viscosity solution to \eqref{eq:HJB} in  the class $C(\mathbb R^n)$ with polynomial growth. Moreover, $V$ is the unique strong solution to  \eqref{eq:HJB} in the class of  functions in $W^{2,p}_{\rm loc}(\mathbb R^n)$ with polynomial growth.
 \end{coroll}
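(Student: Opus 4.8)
The plan is to obtain existence and uniqueness by combining the existence result for $L^p$-viscosity solutions (Proposition~\ref{prop:existence_viscosity}) with the verification machinery of Section~\ref{sec:verification}, which pins any such solution down to be the value function $V$. First I would check that all the standing hypotheses needed for the earlier results are in force: under Assumptions~\ref{hp:b,sigma,bounded}, \ref{hp:l,bounded} we have Assumptions~\ref{hp:locally_bounded_coeff} and \ref{hp:l_subpolynomial_growth} (with $m=0$), and Assumption~\ref{hp:uniform_ellipticity_strong} implies Assumption~\ref{hp:uniform_ellipticity}; so Propositions~\ref{th:existence_uniqueness_Lp_viscosity} and \ref{prop:existence_viscosity} apply. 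By Proposition~\ref{prop:existence_viscosity} there exists an $L^p$-viscosity solution $v\in C_b(\mathbb R^n)$ of \eqref{eq:HJB}, and by Proposition~\ref{th:existence_uniqueness_Lp_viscosity}(i) we have $v\in W^{2,p}_{\rm loc}(\mathbb R^n)$ and $v$ is a strong solution.

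Next I would verify that Assumption~\ref{hp:v}(a) holds for this $v$: $v$ is a strong solution, and since $v$ is bounded, for every admissible pair $(y(\cdot),u(\cdot))$ one has $\mathbb E[|v(y(s))|]\le \|v\|_\infty<\infty$ and $e^{-\rho s}\mathbb E[|v(y(s))|]\le e^{-\rho s}\|v\|_\infty\to 0$, i.e. \eqref{eq:behaviour_Ev(y)} holds. Together with Assumptions~\ref{hp.compactness_U}, \ref{hp:measurability}, \ref{hp:uniform_ellipticity_strong}, \ref{hp:l_subpolynomial_growth}, we are then exactly in the situation of Proposition~\ref{prop:optimal_feedback_example} (case~(a)): that proposition produces a Borel selection $\psi$ of $\Psi$ and a solution of the closed-loop equation, and—crucially—its proof shows that Assumption~\ref{hp:state-eq} and Assumption~\ref{hp:V} are automatically satisfied here (via the state estimate \eqref{eq:state_est} and boundedness of $l$), so no extra hypotheses are needed. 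The conclusion of Proposition~\ref{prop:optimal_feedback_example} in case~(a) gives $v(x)=V(x)$ for every $x\in\mathbb R^n$. Hence $V=v$ is itself an $L^p$-viscosity solution and a strong solution of \eqref{eq:HJB}, and it lies in $C(\mathbb R^n)\cap W^{2,p}_{\rm loc}(\mathbb R^n)$ with (bounded, hence) polynomial growth. This establishes existence.

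For uniqueness in the $L^p$-viscosity class: let $\tilde v\in C(\mathbb R^n)$ be any $L^p$-viscosity solution of \eqref{eq:HJB} with polynomial growth. By Proposition~\ref{th:existence_uniqueness_Lp_viscosity}(i), $\tilde v\in W^{2,p}_{\rm loc}(\mathbb R^n)$ and is a strong solution. Since $b,\sigma$ are bounded, the state estimate \eqref{eq:state_est} gives $\mathbb E[|y(t)|^m]\le C t^m(1+|x|^m)$ for every admissible pair, so the polynomial growth of $\tilde v$ yields $\mathbb E[|\tilde v(y(s))|]<\infty$ for all $s>0$ and $e^{-\rho s}\mathbb E[|\tilde v(y(s))|]\le C e^{-\rho s}(1+ s^m)(1+|x|^m)\to 0$; thus \eqref{eq:behaviour_Ev(y)} holds for $\tilde v$ and every admissible pair. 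Now $\tilde v$ satisfies Assumption~\ref{hp:v}(a), and applying Proposition~\ref{prop:optimal_feedback_example}(a) (or directly Proposition~\ref{cor:closed_loop}(a), using the selection it provides) with $v=\tilde v$ gives $\tilde v(x)=V(x)$ for all $x$. Hence $\tilde v=V$. The argument for strong solutions is identical: any $w\in W^{2,p}_{\rm loc}(\mathbb R^n)$ with polynomial growth that is a strong solution of \eqref{eq:HJB} satisfies \eqref{eq:behaviour_Ev(y)} by the same estimate, hence Assumption~\ref{hp:v}(a), hence $w=V$; and conversely $V$ is such a strong solution by the existence part. The main obstacle — and the only genuinely substantive point — is the verification that the decay condition $e^{-\rho s}\mathbb E[|v(y(s))|]\to 0$ in \eqref{eq:behaviour_Ev(y)} indeed follows from polynomial growth of the solution together with boundedness of $b,\sigma$; this is precisely what forces the boundedness assumptions on $b,\sigma$ (Assumption~\ref{hp:b,sigma,bounded}) in the hypotheses, since it is exponential discounting against at most polynomial growth in $s$ of $\mathbb E[|y(s)|^m]$ that makes the limit vanish. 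Everything else is an assembly of the already-proven Propositions~\ref{th:existence_uniqueness_Lp_viscosity}, \ref{prop:existence_viscosity}, \ref{prop:optimal_feedback_example} and Corollary~\ref{cor:verification_viscosity_sufficient}.
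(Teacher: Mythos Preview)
Your proposal is correct and follows essentially the same approach as the paper: existence via Proposition~\ref{prop:existence_viscosity} together with the regularity of Proposition~\ref{th:existence_uniqueness_Lp_viscosity}, and uniqueness by showing that any polynomially growing solution satisfies Hypothesis~\ref{hp:v}(a) (using the state estimate \eqref{eq:state_est}) and then invoking Proposition~\ref{prop:optimal_feedback_example} to identify it with $V$. Your write-up is in fact more detailed than the paper's in verifying the intermediate hypotheses and the decay condition \eqref{eq:behaviour_Ev(y)}, but the underlying argument is the same.
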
}
  \begin{proof}
{Existence: Proposition \ref{prop:existence_viscosity}
gives existence of an $L^p$-viscosity solution $v \in C_b(\mathbb R^n)$ to \eqref{eq:HJB}. In turn, by Proposition \ref{th:existence_uniqueness_Lp_viscosity} we have $v \in W^{2,n}_{\rm loc}(\mathbb R^n)$ and it is also a strong solution.}

{Uniqueness:  let   $v \in W^{2,p}_{\rm loc}(\mathbb R^n)$ be a strong solution (or, equivalently, by Proposition \ref{th:existence_uniqueness_Lp_viscosity}, let $v \in C(\mathbb R^n)$ be an $L^p$-viscosity solution)  to \eqref{eq:HJB}. Using \eqref{eq:state_est} and the polynomial growth of  $v$, we have that $v$ satisfies Assumption \ref{hp:v} (a). Then,
 Proposition \ref{prop:optimal_feedback_example} implies $v=V$.}
   \end{proof}
 
\section{Application to stochastic optimal advertising models}\label{sec:application}
In this section, we solve a  stochastic optimal control  problem arising in economics within the context of optimal advertising (see e.g. \cite{grosset_viscolani,nerlove_arrow,deFeo_2023,defeo_federico_swiech,gozzi_marinelli_2004} and the references therein).

 Consider a firm who seeks to optimize the advertising campaign for a certain product.
Assume that the dynamics of the stock of the advertising goodwill $y(t)$ of the product is given by the following controlled 1-dimensional SDE
 \begin{equation*} 
 dy(t) = \left[ a (y(t))+c u(t) \right]          dt   + [\nu(y(t)) +\gamma u(t)] dW(t),\quad   y(0)=x,  
 \end{equation*}   
 where $W(t)$ is a real-valued Brownian motion, representing the uncertainty in the market; the control process $u(t)$ with values in $U:= [0,\bar u]$,   $\bar u>0$, models the investment rate;  $a \colon \mathbb R \to (-\infty,0]$  is a bounded, Borel measurable function (hence, it is allowed to be discontinuous),  representing image deterioration under different regimes, depending on the level of the goodwill;   $c  > 0$ is an investment effectiveness factor;   $\nu\colon \mathbb R \to (0,\infty)$ and $ \gamma \geq 0$  represent the intensity of the uncertainty in the model. We assume that $\nu$ is a bounded uniformly continuous function such that $\nu(x)\geq \delta$ for $\delta>0$.
 
 The goal is to minimize, over all admissible control processes $u(\cdot)$,
 $$ \mathbb{E} \left[\int_0^\infty e^{-\rho s} (h(u(s)-g(y(s))) d s\right], $$ where  $\rho >0$ is a discount factor,  $h \colon U \rightarrow \mathbb R$  is a continuous (bounded) cost function  and $g \colon \mathbb R \rightarrow \mathbb R$ is a bounded Borel measurable  utility function.

Setting $b(x,u):=a (x)+c u, \sigma(x,u):=\nu(x)+\gamma u, l(x,u):=h(u)-g(x)$, we are in the setting of Section \ref{sec:formul}. Then, with the notations of Section \ref{sec:HJB}, we have a {Borel} measurable Hamiltonian
\begin{small}
\begin{align*}
 &H(x,p,Z) =a(x)p - g(x)+ \inf_{u\in U}\left[ cu p  + {1\over 2} (\nu(x)+\gamma u)^2  Z+h(u) \right], &    x,p,Z \in \mathbb R, u \in U.
\end{align*}
 \end{small}
{For $p \geq n=1$ we can apply Proposition \ref{prop:optimal_feedback_example} and Corollary \ref{coroll:existence_uniqueness} (notice that $H$ satisfies Assumption \ref{hp.compactness_U} (due to the compactness of $U$ and the continuity of all the functions of $u$) to completely solve the optimal advertising problem by 
 \begin{itemize}
     \item characterizing the value function $V$  as the unique  $L^p$-viscosity solution in $C_b(\mathbb R)$  to the HJB equation \eqref{prop:existence_viscosity}, which here reads
\begin{equation}
\rho v -a(x)Dv + g(x)-  \inf_{u\in U}\left[ cu Dv  +  {1\over 2} (\nu(x)+\gamma u)^2  D^2v+ h(u) \right] =0,
\quad \forall x \in \mathbb R,
\end{equation}
\item finding a weak solution $y(t)$ of the closed loop equation \eqref{eq:closed_loop}, where 
 $\psi$ is  a Borel measurable selection of 
 the set valued map
  \begin{align*}
 \Psi(x) ={\rm argmin}_{u \in U} \left[ cu DV(x)  + {1\over 2} (\nu(x)+\gamma u)^2  D^2V(x)+ h(u)\right] , \quad  x \in \mathbb R.
  \end{align*}
  Therefore, the optimal policy is $u(t):=\psi(y(t))$.
 \end{itemize}}
 
 \appendix
 \section{Notations}\label{sec:notations}
 \subsection{Basic notations}Throughout the paper, we  work with Von Neumann–Bernays–Gödel set theory (NBG) \cite{mendelson}\footnote{Recall that NBG is a conservative extension of Zermelo-Fraenkel set theory (with the Axiom of Choice)}, as we work with proper classes (i.e. classes which are not sets).
 We denote by  $\mathbb R^{m \times n}$ the space of real valued $m \times n$-matrices and we denote by $|\cdot|$ the Euclidean norm in $\mathbb{R}^{n}$ as well as the Frobenius norm in $\mathbb R^{m \times n}$. We denote by $x \cdot y$ the inner product in $\mathbb R^n$. For $R>0$ we denote $B_R:=\{x \in \mathbb R^n: |x|< R \}.$ 
 We denote by $S^n$ the space of $n\times n$-symmetric matrices. We will use the order relation $\geq$ on  $S^n$, defined by $Y \geq Z$ if $(Yx) \cdot x \geq (Zx)\cdot x$ for every $Y,Z \in S^n, x \in \mathbb R^n$.

 {Given a topological space $X$, we denote by $\mathcal B(X)$ the Borel $\sigma$-algebra.}
 
  We write $C>0,\omega, \omega_R$ to indicate, respectively, a constant, a modulus of continuity, and a local modulus of continuity, which may change from line to line if the precise dependence on other data is not important. 
 
 Let $O \subset \mathbb R^n$ be open.
If $v \colon O \to \mathbb R$ is differentiable  at $x \in O$, we denote by $D v(x)$ its gradient at $x$ and if it is twice differentiable at $x \in O$, we denote by $D^2 v(x)$ its Hessian matrix at $x$.  We will denote by $C^k(\mathbb R^n),C^k_b(\mathbb R^n)$, $k \in \mathbb N$, respectively, the space of $k$-times continuously differentiable functions, $k$-times continuously differentiable functions with bounded $k$-th  derivative.  For $0<\alpha \leq 1$, we will denote by $C^{1,\alpha}$ the space of  continuously differentiable functions with $\alpha$-Holder's continuous derivatives. 

Throughout the whole paper, given a Lebesgue measurable function $v \colon O \to \mathbb R$, we will pick   a representative member of its equivalence class with respect to the a.e. relation (with the Lebesgue measure), still denoted by $v$, which is Borel measurable.
For $p \geq 1$, we will denote by $L^p(O;\mathbb R^{n \times m})$  the space of equivalence classes with respect to the a.e. relation  of $p$-integrable functions $v \colon O \to \mathbb R$. 

We will denote by $W^{1,p}(O)$, the Sobolev space of functions $v \in L^p(O)$ such that its distributional gradient  $Dv  \in L^p(O,\mathbb R^n)$ and by $W^{2,p}(O)$, the Sobolev space of functions $v \in L^p(O)$ such that its distributional gradient and Hessian, respectively, $Dv  \in L^p(O,\mathbb R^n)$, $D^2v  \in L^p(O,\mathbb R^{n\times n})$. As said above, we will always pick representative members the equivalence classes of $v, Dv, D^2v$, still denoted by $v, Dv, D^2v$,  which are Borel measurable.

 The {subscript} $_{\rm loc}$ will indicate that any of the previous properties is valid on every compact set contained in $O.$

 \subsection{Properties of spaces $W^{2,p}$}\label{subsec:W2p}
 We will recall here  standard properties of functions in $ W^{2,p}_{\rm loc}(O)$.
 
Let   $O \subset \mathbb R^n$ be open with smooth boundary and consider the space $W^{2,p}_{\rm loc}(O)$ for $p\geq 1$. Hence, $v \in W^{2,p}_{\rm loc}(O)$ is such that $Dv  \in L^p(O,\mathbb R^n)$, $D^2v  \in L^p(O,\mathbb R^{n\times n})$.  
In this case, we have:
\begin{enumerate}[(i)]
\item if $p>n/2$, by Sobolev embeddings we have that (up to a representative of the equivalence class) $v \in C^{0,\alpha}(O)$  for every $\alpha >0$, so that $v$  is continuous on $O$. Moreover, $v$ is twice differentiable a.e. on $O$ and its a.e.-gradient (resp. a.e.-Hessian) is equal to  its distributional gradient $Dv \in L^n_{\rm loc}(\mathbb R^n)$ (resp. Hessian $D^2v\in L^n_{\rm loc}(\mathbb R^{n \times n}))$ (see \cite[Proposition 2.2, Appendix C]{caffarelli_c_s});
\item If $p>n,$ we further have $v \in C^{1,\alpha}(O)$ for every $\alpha >0$ (by Sobolev embeddings), so that $v$ is differentiable on $O$ and its pointiwse gradient coincides with its distributional gradient $Dv$. 
\end{enumerate}

  \section{Viscosity solutions and $L^p$-viscosity solutions}\label{sec:appendix_definitions}
Consider a second order  fully non-linear partial differential equation of the form
\begin{equation}\label{eq:appendix_Lp_viscosity_sol}
F\left(x, u, D u, D^2 u\right)=f(x) \quad \forall x \in  \Omega,
\end{equation}
where  $\Omega \subset \mathbb{R}^n$ is a domain (not necessarily bounded), $F: \Omega \times \mathbb{R} \times \mathbb{R}^n \times S(\mathbb R^n) \rightarrow \mathbb{R}$ is measurable and such that $F(x,0,0,0)=0$ and $f \in L^p_{\rm loc}(\Omega)$, $p >n/2$.  We use the following definitions. 
\begin{definition}[Strong solution]
A function $u \in W^{2,p}(\Omega)$ is called
\begin{enumerate}[(i)]
\item a strong subsolution  of \eqref{eq:appendix_Lp_viscosity_sol} if
$
F\left(x, u(x), D \varphi(x), D^2 \varphi(x)\right) \leq f(x)
$ for a.e. $x \in \Omega$;
\item a strong supersolution  of \eqref{eq:appendix_Lp_viscosity_sol} if
$
F\left(x, u(x), D \varphi(x), D^2 \varphi(x)\right) \geq f(x)
$ for a.e. $x \in \Omega$;
\item  a strong solution  of \eqref{eq:appendix_Lp_viscosity_sol} if $
F\left(x, u(x), D \varphi(x), D^2 \varphi(x)\right) = f(x)
$ for a.e. $x \in \Omega$.
\end{enumerate}
\end{definition}
\begin{definition}[Viscosity solution]\label{def:viscosity_solution_finite_dim}
Let $F,f$ be continuous. A function $u \in C( \Omega)$ is called
\begin{enumerate}[(i)]
\item a viscosity subsolution  of \eqref{eq:appendix_Lp_viscosity_sol} if
$
F\left(x, u(x), D \varphi(x), D^2 \varphi(x)\right)\leq f(x) ,
$
whenever, for $\varphi \in C^2(\Omega),$ the function  $u-\varphi$ attains a local maximum at $x \in \Omega$.
\item a viscosity supersolution of \eqref{eq:appendix_Lp_viscosity_sol} if
$
F\left(x, u(x), D \varphi(x), D^2 \varphi(x)\right) \geq f(x),
$
whenever, for $\varphi \in C^2(\Omega),$ the function $u-\varphi$ attains a local  minimum at $x \in \Omega$.
\item  a viscosity solution of \eqref{eq:appendix_Lp_viscosity_sol} if it is both a viscosity subsolution and a viscosity supersolution of \eqref{eq:appendix_Lp_viscosity_sol}.
\end{enumerate}
\end{definition}
When $F$ is not continuous, the notion typically used is the one of the so called $L^p$-viscosity solutions. In this case the set of test functions is enlarged to functions  in $W_{\text {loc }}^{2, p}(\Omega)$.
\begin{definition}[$L^p$-viscosity solution]\label{def:Lp_viscosity}
A function $u \in C(\Omega)$ is called
\begin{enumerate}[(i)]
\item an $L^p$-viscosity subsolution  of \eqref{eq:appendix_Lp_viscosity_sol} if
$
\operatorname{e s s} \liminf _{y \rightarrow x}F\left(y, u(y), D \varphi(y), D^2 \varphi(y)\right) - f(y)\leq 0 ,
$
whenever, for $\varphi \in W_{\text {loc }}^{2, p}(\Omega),$ the function  $u-\varphi$ attains a local maximum at $x \in \Omega$.
\item an $L^p$-viscosity supersolution of \eqref{eq:appendix_Lp_viscosity_sol} if
$
\operatorname{e s s}  \limsup _{y \rightarrow x}F\left(y, u(y), D \varphi(y), D^2 \varphi(y)\right)- f(y) \geq 0,
$
whenever, for $\varphi \in W_{\text {loc }}^{2, p}(\Omega),$ the function $u-\varphi$ attains a local  minimum at $x \in \Omega$.
\item  an $L^p$-viscosity solution of \eqref{eq:appendix_Lp_viscosity_sol} if it is both an $L^p$-viscosity subsolution and an $L^p$-viscosity supersolution of \eqref{eq:appendix_Lp_viscosity_sol}.
\end{enumerate}
\end{definition}
\begin{prop}\label{prop:equivalence_viscosity_Lpviscosity}
Let $p \geq n$;  assume that $F,f$ are continuous and  \cite[Structure condition (SC)]{caffarelli_c_s} is satisfied. Then, viscosity (sub-, super-) solutions of \eqref{eq:appendix_Lp_viscosity_sol}  and $L^p$-viscosity (sub-, super-) solutions of \eqref{eq:appendix_Lp_viscosity_sol} are equivalent (see \cite{caffarelli_c_s}).
\end{prop}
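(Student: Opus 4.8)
The plan is to establish the two implications separately; the supersolution case follows by the symmetric argument (exchanging the lower and upper Pucci extremal operators $\mathcal P^-,\mathcal P^+$, replacing $\operatorname*{ess\,liminf}$ by $\operatorname*{ess\,limsup}$, and local maxima by local minima), so I describe only the subsolution case. The hypotheses $p \geq n$, the continuity of $F$ and $f$, and the structure condition (SC) of \cite{caffarelli_c_s} all enter.

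First I would dispatch the implication \emph{$L^p$-viscosity subsolution $\Rightarrow$ viscosity subsolution}, which is immediate. Let $u$ be an $L^p$-viscosity subsolution and let $\varphi \in C^2(\Omega)$ be such that $u - \varphi$ attains a local maximum at $x$. Since $C^2(\Omega) \subset W^{2,p}_{\rm loc}(\Omega)$, the function $\varphi$ is an admissible $L^p$-test function, so
\[
\operatorname*{ess\,liminf}_{y\to x}\big[F(y,u(y),D\varphi(y),D^2\varphi(y)) - f(y)\big] \leq 0.
\]
As $\varphi \in C^2$, the maps $D\varphi, D^2\varphi$ are continuous, and since $u, F, f$ are continuous too, the function $y \mapsto F(y,u(y),D\varphi(y),D^2\varphi(y)) - f(y)$ is continuous at $x$; hence its essential liminf as $y \to x$ equals its value at $x$, giving the classical inequality $F(x,u(x),D\varphi(x),D^2\varphi(x)) \leq f(x)$.

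The hard direction is \emph{viscosity subsolution $\Rightarrow$ $L^p$-viscosity subsolution}, where (SC) and $p \geq n$ are indispensable. Arguing by contradiction, suppose $u$ is a viscosity subsolution failing the $L^p$-test: there are $\varphi \in W^{2,p}_{\rm loc}(\Omega)$ with $u - \varphi$ attaining a local maximum at $x_0$ (made strict on some ball $B_r(x_0)$ by subtracting $|x - x_0|^4$) and $\delta > 0$ with
\[
f(y) - F(y,u(y),D\varphi(y),D^2\varphi(y)) \leq -\delta \quad\text{for a.e. } y \in B_r(x_0).
\]
I would mollify $\varphi$ to $\varphi_\varepsilon \in C^\infty$ with $\varphi_\varepsilon \to \varphi$ in $W^{2,p}(B_r(x_0))$; by the embedding $W^{2,p}\hookrightarrow C^1$ valid for $p \geq n$ one also gets $\varphi_\varepsilon \to \varphi$ uniformly, so the strict interior maximum of $u - \varphi_\varepsilon$ persists with interior-to-boundary gap bounded below uniformly in small $\varepsilon$. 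Since $\varphi_\varepsilon$ is smooth, subtracting it and using (SC) shows that $w_\varepsilon := u - \varphi_\varepsilon$ is a viscosity subsolution of the extremal inequality $\mathcal P^-(D^2 w_\varepsilon) - C\,|D w_\varepsilon| \leq g_\varepsilon$, where $g_\varepsilon := f - F(\cdot,u,D\varphi_\varepsilon,D^2\varphi_\varepsilon)$. Writing $g_\varepsilon = \big[f - F(\cdot,u,D\varphi,D^2\varphi)\big] + E_\varepsilon$, the bracket is $\leq -\delta$ a.e. and, as (SC) makes $F$ Lipschitz in its last two arguments, $|E_\varepsilon| \leq C\,\big(|D^2\varphi - D^2\varphi_\varepsilon| + |D\varphi - D\varphi_\varepsilon|\big)$; hence $g_\varepsilon^+ \leq (\,|E_\varepsilon| - \delta\,)^+ \leq |E_\varepsilon| \to 0$ in $L^n(B_r(x_0))$ (using $L^p \hookrightarrow L^n$ on the bounded ball).

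The main obstacle --- and the decisive step --- is to turn this into a contradiction. The Alexandrov--Bakelman--Pucci maximum principle for viscosity subsolutions bounds the interior-to-boundary gap of $w_\varepsilon$ over $B_r(x_0)$ by $C\,\|g_\varepsilon^+\|_{L^n(B_r(x_0))}$; since $p \geq n$ forces $g_\varepsilon^+ \to 0$ in $L^n$, letting $\varepsilon \to 0$ drives this upper bound to $0$, contradicting the uniform lower bound on the gap coming from the strict maximum. The real difficulty is exactly that \emph{almost-everywhere} information about the nonsmooth test $\varphi$ must be converted into a statement at a single touching point, and only the quantitative ABP estimate --- made applicable by the reduction to the extremal operators afforded by (SC) --- is strong enough to absorb the $L^n$-small defect produced by mollification. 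The remaining bookkeeping, including the precise ABP constants, is that of \cite{caffarelli_c_s}.
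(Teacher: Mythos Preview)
The paper does not give its own proof of this proposition; it merely cites \cite{caffarelli_c_s}, and your sketch is precisely the standard argument from that reference (mollify the $W^{2,p}$ test function, use (SC) to reduce to the Pucci extremal operator, and apply the ABP maximum principle to reach a contradiction). One small slip: the embedding $W^{2,p}\hookrightarrow C^1$ you invoke holds only for $p>n$, not for $p=n$; however, all you actually need to ensure the strict interior maximum persists under mollification is uniform convergence $\varphi_\varepsilon\to\varphi$, and this follows from $W^{2,p}\hookrightarrow C^0$, which is valid for every $p>n/2$ and hence for $p\ge n$.
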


 \section{$W^{2,n}$-Dynkin's formula}\label{app:ito}
Let $x \in \mathbb R^n$; for an admissible pair $(y(\cdot),u(\cdot))$ (see Definition \ref{def:weak-form}), recall the definition of the stopping time $\tau^R$, $R>0$ given in \eqref{eq:def_stopping_time}.
 \subsection{Estimates for $W^{2,n}_{\rm loc}$-functions}
The following estimates  \cite[Theorem 2, p.52; Theorem 4, p. 54]{krylov}  grant that Dynkin's formulas for $W^{2,n}$-functions are well-posed.
 \begin{theorem}\label{th:estimate_krylov}
 Let  Assumptions \ref{hp:measurability}, \ref{hp:uniform_ellipticity} be satisfied and assume that there exists $C_R>0$ such that $|b(x,u)| \leq C_R$, for every $x \in \overline B_R, u \in U$.  Then, for every $R>0$, there exists $K_{R,\lambda_R}>0$ such that  for every $f \in L^n_{\rm loc}(\mathbb R^n)$ and every admissible pair $(y(\cdot),u(\cdot))$, it holds
\begin{equation*}
 \mathbb E \left[  \int_0^{t \wedge \tau^R} e^{-\rho s}  \left | f(y(s))  \right|    ds \right]  \leq K_{R,\lambda_R} |f|_{L^n(B_R)}.
 \end{equation*}
 \end{theorem}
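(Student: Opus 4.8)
The plan is to deduce the estimate from Krylov's integral bounds for controlled diffusions stopped at the exit time of a ball, once one observes that the stopping time $\tau^R$ confines the trajectory to $\overline B_R$, precisely the region where the coefficients have the structure those bounds require.

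First I would record the localization. By the definition \eqref{eq:def_stopping_time} of $\tau^R$ and the a.s.\ continuity of the paths of $y(\cdot)$, we have $y(s)\in \overline B_R$ for every $s\in[0,t\wedge\tau^R]$, so only the values of $b,\sigma$ on $\overline B_R\times U$ enter the integral on the left-hand side. On this set the two hypotheses required to invoke Krylov's estimates hold: Assumption \ref{hp:uniform_ellipticity} gives the nondegeneracy $\sigma(x,u)\sigma(x,u)^T\geq \lambda_R I$, while the standing assumption of the theorem gives the drift bound $|b(x,u)|\leq C_R$. It is worth stressing that no upper bound on $\sigma$ is needed: the relevant (Aleksandrov--Bakelman--Pucci--type) estimate controls $\mathbb E\int_0^{\theta}|f(y(s))|\,ds$ by a weighted $L^n$-norm of $f$ in which the determinant of the diffusion matrix appears in the denominator, and the lower ellipticity bound alone absorbs this weight, since $\det(\tfrac12\sigma\sigma^T)\geq (\lambda_R/2)^n$. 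This is exactly why the resulting constant depends only on $n$, $\lambda_R$, $C_R$ and $R$, consistently with the notation $K_{R,\lambda_R}$.

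With these structural facts in hand I would apply directly \cite[Theorem 2, p.52; Theorem 4, p.54]{krylov}. Because the diffusion is nondegenerate on $B_R$, the (uncapped) first exit time $\theta_R$ of $y(\cdot)$ from $B_R$ has finite expectation, and these theorems yield $\mathbb E\int_0^{\theta_R}|f(y(s))|\,ds\leq K_{R,\lambda_R}\,|f|_{L^n(B_R)}$. The discount factor and the time cap are then handled by monotonicity: since $e^{-\rho s}\leq 1$ and $[0,t\wedge\tau^R]\subseteq[0,\theta_R]$,
\begin{equation*}
\mathbb E\left[\int_0^{t\wedge\tau^R} e^{-\rho s}\,|f(y(s))|\,ds\right]\leq \mathbb E\left[\int_0^{\theta_R}|f(y(s))|\,ds\right]\leq K_{R,\lambda_R}\,|f|_{L^n(B_R)},
\end{equation*}
which is the claim. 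Alternatively, the weighted version in \cite[Theorem 4, p.54]{krylov} already incorporates the exponential factor.

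The one genuine point of care---and the step I expect to be the main obstacle---is checking that an admissible pair $(y(\cdot),u(\cdot))$ in our weak formulation, living on a generalized reference probability space and driven by merely Borel measurable coefficients, is an admissible object for the cited theorems of \cite{krylov}. Here the nondegeneracy is again decisive: it is what renders the measurable coefficients harmless, the estimate being insensitive to modification of $f$ (hence of $b,\sigma$) on Lebesgue-null sets. Once this compatibility is secured, the localization and the monotonicity in the integration interval make the remaining steps routine.
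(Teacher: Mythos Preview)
Your proposal is correct and matches the paper's approach: the paper gives no proof at all, simply citing \cite[Theorem~2, p.~52; Theorem~4, p.~54]{krylov} as the source of the estimate, and your write-up is precisely the expected unpacking of that citation (localization to $\overline B_R$ via $\tau^R$, then monotonicity in the time interval and the discount factor). There is nothing to add.
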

 In particular, the previous estimate implies that the process avoids  sets of measure zero $\mathbb P$-a.s.:
\begin{remark}\label{rem:y_not_in_N}
 Let $N \subset \mathbb R^n$ be a Lebesgue measurable subset with zero Lebesgue measure; then  $y(s) \not \in N$ $\mathbb P$-a.s., for a.e. $s \geq 0$.
Indeed, from Theorem \ref{th:estimate_krylov} with  $f=I_N$ we have $I_N(y(s))=0$ for a.e. $s \geq 0$, i.e. the claim.
 \end{remark}
 \subsection{$W^{2,n}_{\rm loc}$-Dynkin's formula.} The  following Dynkin's formula follows from \cite[Theorem 1, p.122]{krylov} (recall the definitions used there and introduced in  \cite[Chapter 2, Section 1]{krylov}. In particular, the Author has the correspondence $W^{2}=W^{2,n}$).  
 \begin{theorem}[$W^{2,n}_{\rm loc}(\mathbb R^n)$-Dynkin's formula]\label{th:dynkin_formula} Let Assumptions \ref{hp:measurability}, \ref{hp:locally_bounded_coeff} (for $b,\sigma$), \ref{hp:uniform_ellipticity},  \ref{hp:state-eq}  hold.
 Let $v \in W^{2,n}_{\rm loc}(\mathbb R^n)$ {and  $(y(\cdot),u(\cdot))$ an admissible pair.} Then
 \begin{equation}\label{eq:Ito_W2}
 \begin{aligned}
\mathbb E \left[ e^{-\rho (t\wedge \tau^R)}  v(y(t \wedge \tau^R )) \right]&=v( x) +\mathbb E \Bigg[  \int_0^{t \wedge \tau^R} e^{-\rho s}  \Big [ -\rho  v(y(s))   +  b(y(s),u(s)) \cdot D  v(y(s))       \\
& \quad 
+ \frac 1 2  \tr \left ( \sigma(y(s),u(s))\sigma(y(s),u(s))^T D^2 v(y(s)) \right ) \Big ]   ds \Bigg ] .\\
\end{aligned}
 \end{equation}
 \end{theorem}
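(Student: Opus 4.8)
The plan is to deduce the formula from the It\^o--Krylov formula for Sobolev functions, \cite[Theorem 1, p.122]{krylov}, applied to the space--time function $w(s,z):=e^{-\rho s}v(z)$ and to the stopped diffusion $y(\cdot\wedge\tau^R)$. Since $w$ is $C^\infty$ in the time variable and, restricted to any ball, lies in $W^{2,n}$ in the space variable, it is an admissible test function for Krylov's formula; the content of that theorem is precisely that for non-degenerate diffusions one may apply It\^o to functions that are only twice weakly differentiable in space (recall the properties collected in Subsection \ref{subsec:W2p}).

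First I would fix $R>0$ and observe that, by the definition of $\tau^R$ in \eqref{eq:def_stopping_time}, the trajectory $y(s)$ remains in $\overline{B_R}$ for all $s\le\tau^R$. On this region the hypotheses required by \cite[Theorem 1, p.122]{krylov} are met: the coefficients $b(\cdot,u),\sigma(\cdot,u)$ are Borel measurable (Assumption \ref{hp:measurability}) and bounded uniformly in $u\in U$ (Assumption \ref{hp:locally_bounded_coeff}), and the diffusion matrix is uniformly non-degenerate, $\sigma\sigma^T\ge\lambda_R I$ on $B_R$ (Assumption \ref{hp:uniform_ellipticity}). Moreover $v\in W^{2,n}_{\rm loc}(\mathbb R^n)$ restricts to an element of $W^{2,n}(B_R)$, and Assumption \ref{hp:state-eq} guarantees that an admissible pair exists, so the statement is non-vacuous. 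Applying Krylov's formula then yields
\begin{equation*}
\mathbb E\!\left[w(t\wedge\tau^R,y(t\wedge\tau^R))\right]
= w(0,x)+\mathbb E\!\int_0^{t\wedge\tau^R}\!\!\left[\partial_s w+\mathcal L^{u(s)}w\right](s,y(s))\,ds,
\end{equation*}
where $\mathcal L^{u}\varphi(z):=b(z,u)\cdot D\varphi(z)+\tfrac12\tr(\sigma(z,u)\sigma(z,u)^T D^2\varphi(z))$ is the generator. Substituting $\partial_s w=-\rho e^{-\rho s}v$ and $\mathcal L^{u}w=e^{-\rho s}[\,b\cdot Dv+\tfrac12\tr(\sigma\sigma^T D^2 v)\,]$, and noting $w(0,x)=v(x)$, reproduces exactly \eqref{eq:Ito_W2}.

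The delicate point, and the only one that genuinely uses more than a mechanical citation, is the well-posedness of the right-hand side: the integrand contains $Dv(y(s))$ and $D^2v(y(s))$, which are defined only up to a Lebesgue-null set, so one must check that the expected time-integral neither depends on the chosen representatives nor charges the exceptional set. This is furnished by the Aleksandrov--Krylov estimate of Theorem \ref{th:estimate_krylov} (applied with $f=|Dv|$ and $f=|D^2 v|$, both in $L^n(B_R)$), which bounds $\mathbb E\int_0^{t\wedge\tau^R}e^{-\rho s}|f(y(s))|\,ds$ by $K_{R,\lambda_R}\|f\|_{L^n(B_R)}$; in particular, by Remark \ref{rem:y_not_in_N}, the non-degenerate process spends zero time in any null set $\mathbb P$-a.s., so every term is unambiguous. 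I expect this integrability and representative-independence verification --- rather than the invocation of the It\^o--Krylov formula itself --- to be the main obstacle, since it is exactly where local $W^{2,n}$ regularity and uniform ellipticity interact, and it is also the mechanism by which Krylov's formula is itself established via mollification of $v$ on $B_R$ and passage to the limit in the a priori estimate.
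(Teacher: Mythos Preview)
Your proposal is correct and follows the same route as the paper: the paper does not give a proof but simply states that the formula ``follows from \cite[Theorem 1, p.122]{krylov}'', which is precisely the It\^o--Krylov result you invoke. Your additional details---the reduction to the space--time function $w(s,z)=e^{-\rho s}v(z)$, the verification of hypotheses on $\overline{B_R}$, and the integrability check via Theorem~\ref{th:estimate_krylov}---are a reasonable fleshing-out of what that citation entails, and nothing in them is incorrect.
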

 We recall (see Appendix \ref{sec:notations}) that we are picking  representative members of the corresponding equivalence classes of $v,Dv, D^2v$ which are, respectively, continuous, Borel measurable, Borel measurable. As explained there, such derivatives have both distributional and a.e.-pointwise  sense.

We refer the reader to \cite{chitashvili_mania} for an Ito's formula in a similar setting and to \cite{cheng_deangelis,gozzi_russo_1} for extensive references of other non-smooth Ito's formulas available in the literature.
 
\section{A measurable selection theorem in optimization}\label{app:measurable-selection}
{In this section, we recall a measurable selection theorem in optimization theory from \cite[Appendix A.2]{bersektas}.}

{Throughout this section, let $X,Y$ be Borel spaces. If $h:X\times Y\to [-\infty,\infty]$ is Borel measurable, the partial infimum  and the argmin functions are not necessarily Borel measurable. This issue can be solved by enlarging the class of measurable sets and  measurable functions by considering the following classes. We denote the projection mapping $\operatorname{proj}_X:X\times Y\to X$,  $\operatorname{proj}_X(x,y)=x$.
\begin{definition}[Analytic set]
    A subset $A\subset X$ is said to be analytic if there exists a Borel space $Y$ and a Borel subset $B\subset X \times Y$ such that $A=\operatorname{proj}_X(B)$. 
\end{definition}
It is clear that every Borel subset of a Borel space is analytic.
\begin{definition}[Lower semianalytic function]
We say that $h: X \mapsto[-\infty, \infty]$ is lower semianalytic if the level set
$
\{x \in X : h(x)<c\}
$
is analytic for every $c \in \mathbb R$. 
\end{definition}
\begin{remark}\label{rem:borel_meas_implies_lsa}
    If $h: X \to[-\infty, \infty]$ is Borel measurable then it is lower semianalytic.
\end{remark}
\begin{definition}[Universal $\sigma$-algebra $\mathcal{U}_X$]\label{def:univ_sigma_alg}
    The universal $\sigma$-algebra $\mathcal{U}_X$  is defined as the intersection of all completions of $\mathcal{B}_X$ with respect to all probability measures on $(X,\mathcal B_X)$. Thus, $E \in \mathcal{U}_X$ if and only if, given any probability measure $p$ on $\left(X, \mathcal{B}_X\right)$, there is a Borel set $B$ and a $p$-null set $N$ such that $E=B \cup N$. Clearly, we have $\mathcal{B}_X \subset \mathcal{U}_X$.
\end{definition}
 It is also true that every analytic set is universally measurable, and hence the $\sigma$-algebra generated by the analytic sets, called the analytic $\sigma$-algebra, and denoted $\mathcal{A}_X$, is contained in $\mathcal{U}_X$ :
$\mathcal{B}_X \subset \mathcal{A}_X \subset \mathcal{U}_X$.
\begin{definition}[Universally measurable function]\label{def:univ_meas}
We say that  a function $h: X \to Y$ is universally measurable if $h^{-1}(B) \in \mathcal{U}_X$ for every $B \in \mathcal{B}_Y$. 
\end{definition}
\begin{prop}[Measurable Selection Theorem]\label{th:meas_selection_univ} Let $D \subset X \times Y$ be an analytic set and $h: D \rightarrow[-\infty, \infty]$ a lower semianalytic function. For $x \in X$, let $D_x=\{y \in Y :(x, y) \in D\}$. Define $h^*: \operatorname{proj}_X(D) \rightarrow[-\infty, \infty]$ by $h^*(x)=\min _{y \in D_x} h(x, y)$, where we assume that the minimum is attained at any $x$. Then $h^*$ is lower semianalytic  and there exists a universally measurable  function $\psi: \operatorname{proj}_X(D) \rightarrow Y$ such that $(x, \psi(x)) \in D$ for every $x \in \operatorname{proj}_X(D)$ and
$$
\begin{aligned}
h(x, \psi(x)) & =h^*(x),\quad \forall x \in \operatorname{proj}_X(D).
\end{aligned}
$$
\end{prop}}
\paragraph{\textbf{Acknowledgments.}} The author is grateful to Andrzej Święch {and the Referee} for many helpful comments related to the content of the manuscript and to Fausto Gozzi and Giorgio Fabbri for discussions related to optimal advertising models with discontinuous coefficients.  The author acknowledges funding by the Deutsche Forschungsgemeinschaft (DFG, German Research Foundation) – CRC/TRR 388 ``Rough Analysis, Stochastic Dynamics and Related Fields'' – Project ID 516748464, by INdAM (Instituto Nazionale di Alta Matematica F. Severi) - GNAMPA (Gruppo Nazionale per l'Analisi Matematica, la Probabilità e le loro Applicazioni), and by the Italian Ministry of University and Research (MIUR), in the framework of PRIN projects 2017FKHBA8 001 (The Time-Space Evolution of Economic Activities: Mathematical Models and Empirical Applications) and 20223PNJ8K (Impact of the Human Activities on the Environment and Economic Decision Making in a Heterogeneous Setting: Mathematical Models and Policy Implications).

\bibliographystyle{amsplain}

\end{document}